\documentclass[a4paper,11pt, reqno]{amsart}
\usepackage{amssymb,amsthm,amsmath}

\usepackage{enumerate}
\usepackage{ifthen}
\usepackage{graphicx} 
\baselineskip=0pt
\baselineskip=0pt

\setlength{\topmargin}{0mm}
\setlength{\headheight}{0mm} 
\setlength{\headsep}{10mm}
\setlength{\textheight}{238mm}
\setlength{\footskip}{15mm}
\setlength{\marginparpush}{20pt}

\setlength{\oddsidemargin}{5mm}\setlength{\evensidemargin}{0mm}
\setlength{\textwidth}{160mm}
\setlength{\marginparsep}{0mm}
\setlength{\marginparwidth}{20mm}
\setlength{\parsep}{20em}

\usepackage[colorlinks,citecolor=blue,hypertexnames=false]{hyperref}

\setlength{\textwidth}{15.1cm} \setlength{\oddsidemargin}{1cm}
\setlength{\evensidemargin}{1cm} \setlength{\footskip}{38pt}
\pagestyle{plain} \numberwithin{equation}{section}
\newtheorem{thm}{Theorem}[section]
\newtheorem{cor}[thm]{Corollary}
\newtheorem{lem}[thm]{Lemma}
\newtheorem{prop}[thm]{Proposition}
\newtheorem{defn}[thm]{Definition}

\allowdisplaybreaks[2]

\theoremstyle{definition}
\newtheorem{rmk}[thm]{Remark}

{\qed\bigskip}

\newcounter{alphabet}


\ifx\undefined\bysame
\newcommand{\bysame}{\leavevmode\hbox to3em{\hrulefill}\,}
\fi


\markboth{} {}

\title[Fractional damped wave equation on compact Lie groups]
{Nonlinear fractional damped wave equation on compact Lie groups}
\author{Aparajita Dasgupta} 
\address{Aparajita Dasgupta, Assistant Professor \endgraf Department of Mathematics
	\endgraf Indian Institute of Technology  Delhi
	\endgraf Delhi, 110016  India.} 
\email{adasgupta@maths.iitd.ac.in}
\author{Vishvesh Kumar} 

\address{Vishvesh Kumar, Ph. D.  \endgraf Department of Mathematics: Analysis, Logic and Discrete Mathematics
	\endgraf Ghent University
	\endgraf Krijgslaan 281, Building S8,	B 9000 Ghent,
	Belgium.} 
\email{vishveshmishra@gmail.com}

\author{Shyam Swarup Mondal} 

\address{Shyam Swarup Mondal  \endgraf Department of Mathematics
	\endgraf Indian Institute of Technology Delhi
	\endgraf Delhi, 110 016, India.} 
\email{mondalshyam055@gmail.com}

\keywords{Nonlinear fractional damped wave equation,  well-posedness, fractional    compact Lie groups,  $L^2-L^2$-estimates, finite blow-up} \subjclass[2010]{Primary 35L15,  35L05; Secondary  35L05}
\thanks{ The first and third authors were supported by Core Research Grant(RP03890G), Science and Engineering Research Board (SERB), DST, India.  The second author was supported  by the FWO Odysseus 1 grant G.0H94.18N: Analysis and Partial Differential Equations, the Methusalem programme of the Ghent University Special Research Fund (BOF) (Grant number 01M01021), and by FWO Senior Research Grant G011522N.}
\date{\today}
\begin{document}
	\allowdisplaybreaks

	\begin{abstract} 
		
		 In this paper, we deal with the initial value fractional damped wave equation on $G$, a compact Lie group,  with power-type nonlinearity. The aim of this manuscript is twofold. First,  using the Fourier analysis on compact Lie groups, we prove a local in-time existence result in the energy space for the fractional damped wave equation on $G$. Moreover, a finite time blow-up result is established under certain conditions on the initial data. In the next part of the paper,  we consider fractional wave equation with lower order terms, that is, damping and mass with the same power type nonlinearity on compact Lie groups, and prove the global in-time existence of small data solutions in the energy evolution space.

	\end{abstract}

	\maketitle
	\tableofcontents 
	\section{Introduction}
	The study of partial differential equations is indeed one of the fundamental tools for understanding and modeling natural and real-world phenomena. Fractional differential operators are nonlocal operators that are considered as a generalization of classical differential operators of arbitrary non-integer orders.
	For the last few decades, the study of  partial differential equations involving nonlocal operators have gained a considerable amount of interest and have become one of the essential topics in mathematics and its applications. Many physical phenomena in engineering,    quantum field theory, astrophysics,  biology, materials, control theory, and other sciences can be successfully described by models utilizing mathematical tools from fractional calculus \cite{Neww1, NN1,NN3,NN4, f4}.    In particular, the fractional Laplacian is represented as the infinitesimal generator of stable radially symmetric Lévy processes  \cite{EE1}. 
	For other exciting models related to fractional differential equations, we refer to the reader  \cite{f3, f10,f11,EE24} to mention only a few of many recent publications.

	In recent years, due to the nonlocal nature of the fractional derivatives, considerable attention has been devoted to various models involving fractional Laplacian and nonlocal operators by several researchers.  	There is a vast literature available involving the fractional Laplacian on the Euclidean framework, which is difficult to mention; we refer to important papers     \cite{EE3, EE4, EE5, EE6, EE10, EE24, EE27, EE31} and the references therein.  Here we would like to point out that the fractional Laplacian operator $(-\Delta)^{\alpha}$ can be reduced to the classical Laplace operator $-\Delta$ as $\alpha \rightarrow 1$. We refer to \cite{EE24} for more details. In particular,  many interesting results in some classical elliptic problems have been extended in the fractional Laplacian setting, see 	\cite{EE9}.
	
	For the classical semilinear damped wave equation in $\mathbb{R}^n$, the global existence or a  blow-up result depending on the critical exponent has been studied in \cite{IKeta and Tanizawa, Matsumura, Zhang, Todorova}. We refer to the excellent book    \cite{Rei}  for global in-time small data solutions for the semilinear damped wave equation on the Euclidean framework.  
	
	The study of the semilinear damped wave equation has also been extended in the non-Euclidean framework. Several papers have studied linear PDE in non-Euclidean structures in the last decades.   For example, the semilinear wave equation with or without damping has been investigated for the   Heisenberg group \cite{24,30}.    In the case of graded groups, we refer to the recent works \cite{gra1, gra2, gra3}.  	Concerning the damped wave equation on compact Lie groups, we refer to \cite{27, 28,31,garetto, BKM22}.   Particularly,  the author in \cite{27} studied semilinear damped wave equation with power type nonlinearity $|u|^p$ on compact Lie groups and proved a local in-time existence result in the energy space via Fourier analysis on compact Lie groups. He also derived a blow-up result for the semilinear Cauchy problem for any $p > 1$.   Also,     considering the semilinear wave equation with damping and mass with power nonlinearity $|u|^p$ on compact Lie groups and without any lower bounds for $p > 1$,  the author proved the global in time existence of small data solutions in the evolution energy space in   \cite{28}.
	For the study of semilinear wave equation of general compact manifolds, we refer to the seminal works \cite{BGT04, KAP95} where the global in-time solution were investigated by establishing famous Strichartz type estimates. Recently, the wave equation were also explored in the noncompact manifolds setup, see \cite{AZ22, Zhang20, Zhang21, SSW19} and reference therein. 
	
	Then, an interesting and viable problem is to study  the fractional wave equation  (\ref{eq0010}) and (\ref{2number311})  of order $\alpha$ with $ 0 < \alpha <  1$, with power-type nonlinearity. In \cite{Ahmad15}, the authors have investigated the nonexistence of global weak solutions to the nonlinear fractional wave equation with power type nonlinearlity on the Heisenberg group. In the setting for compact Lie groups, we have recently started a systematic study of the nonlinear fractional wave equation on compact Lie groups. This work is a continuation of our previous work \cite{shyamm}. To state our problem, let  $G$ be a compact Lie group with normalized Haar measure $dx$ and let  $\mathcal{L}$ be the Laplace-Beltrami operator on $G$ (which also coincides with the Casimir element of the universal enveloping algebra of Lie algebra of $G$). For $0<\alpha<1$,  we consider the following two Cauchy problems for the fractional wave equation  with power type nonlinearity, namely, with damping term,
	\begin{align} \label{eq0010}
		\begin{cases}
			\partial^2_tu+(-\mathcal{L})^\alpha u+	\partial_tu =|u|^p, & x\in G,t>0,\\
			u(0,x)=\varepsilon u_0(x),  & x\in G,\\ \partial_tu(x,0)=\varepsilon u_1(x), & x\in G,
		\end{cases}
	\end{align}
	and with damping and positive mass,
	\begin{align} \label{2number311}
		\begin{cases}
			\partial^2_tu+\left( -\mathcal{L}\right)^\alpha u+b\partial_{t} u+m^2u=|u|^p, & x\in G, t>0,\\
			u(0,x)=u_0(x),  & x\in G,\\ \partial_tu(x,0)=u_1(x), & x\in G,
		\end{cases}
	\end{align}
	where  $ p > 1,b,m^2$ are positive constants and $\varepsilon$ is a positive constant describing the smallness of the Cauchy data. Here  for the moment, we assume that  $u_{0}$ and $ u_{1}$ are  taken from the energy space $ H_{\mathcal{L}}^\alpha(G)$ (see \eqref{sob} for the definition) and $ L^2(G)$,  respectively.

	This paper investigates a  finite time blow-up result for solutions to the fractional damped wave equation involving the Laplace-Beltrami operator on compact Lie groups under a suitable sign assumption for the initial data. Moreover,   we show that the presence of a positive damping term and a positive mass term in the   Cauchy problem completely reverses the scenario, i.e.,  we prove the global existence of small data solutions for the fractional wave equation with damping and mass. More preciously, using the Gagliardo-Nirenberg type inequality (in order to handle power nonlinearity in $L^2(G))$ and Fourier analysis on compact Lie groups, we prove the local well-posedness of the   Cauchy problem (\ref{eq0010}) in the energy evolution space  $\mathcal{C}\left([0, T], H_{\mathcal{L}}^{\alpha}( {G})\right) \cap \mathcal{C}^{1}\left([0, T], L^{2}( {G})\right)$ and  the  global in time existence of small data solutions for the        Cauchy problem (\ref{2number311}).

	\subsection{Main results}
	We denote $L^{q}(G), 1 \leq  q<\infty$, the space of $q$-integrable functions on the compact Lie group $G$ concerning the normalized Haar measure $dx$ on $G$   and essentially bounded for $q=\infty$ throughout the paper.   For $s>0$ and $q \in(1, \infty)$, the fractional Sobolev space $H_{\mathcal{L} }^{ s, q}(G)$ of order $\alpha$ is defined as  
	\begin{align}\label{sob}
		H_{\mathcal{L}}^{s, q}(G) \doteq\left\{f \in L^{q}(G):(-\mathcal{L})^{s / 2} f \in L^{q}(G)\right\},
	\end{align}
	endowed with the norm $\|f\|_{H_{\mathcal{L}}^{s, q}(G)} =: \|f\|_{L^{q}(G)}+\left\|(-\mathcal{L})^{s / 2} f\right\|_{L^{q}(G)}$.  We simply denote   the Hilbert space $H_{\mathcal{L}}^{s, 2}(G)$ by $H_{\mathcal{L}}^{s}(G)$.   
	
	By employing noncommutative   Fourier analysis for compact Lie groups, our first result concerning   $L^2$-decay estimates for the solution of the linear version of the Cauchy problem (\ref{eq0010}) (when $f=0$)   is   stated in the following proposition.  
	\begin{prop}\label{thm11}
		Let $0<\alpha <1$.  Suppose that $(u_0, u_1)\in H_{\mathcal{L}}^\alpha(G) \times  L^2(G)$ and $u\in\mathcal{C}([0,\infty),H_{\mathcal{L}}^\alpha(G))\cap \mathcal{C}^1([0,\infty),L^2(G))$ be the solution to the homogeneous Cauchy problem
		\begin{align}\label{eq1}
			\begin{cases}
				\partial^2_tu+(-\mathcal{L})^\alpha u+\partial_tu =0, & x\in G,~t>0,\\
				u(0,x)=u_0(x),  & x\in G,\\ \partial_tu(x,0)=u_1(x), & x\in G.
			\end{cases}
		\end{align}
		Then, $u$ satisfies the following $L^2( G)-L^2( G)$ estimates
		\begin{align}\label{111111}
			\| u(t,\cdot)\|_{L^2( G)}  &\lesssim( \left\|u_{0}\right\|_{L^{2}(G)}+t\left\|u_{1}\right\|_{L^{2}(G)}),\\\nonumber
			\left\|(-\mathcal{L})^{\alpha / 2} u(t, \cdot)\right\|_{L^{2}(G)}&\lesssim (1+t)^{-\frac{1}{2}} (\left\|u_{0}\right\|_{H_{\mathcal{L}}^{{\alpha }}(G)}^{2}+\left\|u_{1}\right\|_{L^{2}(G)}^{2}),\\\nonumber
			\|\partial_tu(t,\cdot)\|_{L^2( G)}&\lesssim (1+t)^{-1} (\left\|u_{0}\right\|_{H_{\mathcal{L}}^{{\alpha }}(G)}^{2}+\left\|u_{1}\right\|_{L^{2}(G)}^{2}).
		\end{align}
		for any $t\geq 0$.
	\end{prop}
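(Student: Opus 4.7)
My plan is to solve the homogeneous problem \eqref{eq1} explicitly via the group Fourier transform on $G$, exploiting the fact that the Laplace--Beltrami operator $\mathcal{L}$ is diagonalized by the matrix coefficients of the irreducible unitary representations of $G$. Writing $\widehat{(-\mathcal{L}f)}(\xi) = \nu_\xi^{2}\,\widehat{f}(\xi)$ for $\xi \in \widehat{G}$, where $\nu_\xi^{2}$ is the corresponding Casimir eigenvalue, the Cauchy problem \eqref{eq1} is equivalent, componentwise, to the matrix-valued ODE
\begin{equation*}
\partial_t^2 \widehat{u}(t,\xi) + \partial_t \widehat{u}(t,\xi) + \nu_\xi^{2\alpha}\,\widehat{u}(t,\xi) = 0, \qquad \widehat{u}(0,\xi) = \widehat{u}_0(\xi), \quad \partial_t \widehat{u}(0,\xi) = \widehat{u}_1(\xi),
\end{equation*}
whose characteristic roots are $\lambda_\pm(\xi) = -\tfrac{1}{2} \pm \tfrac{1}{2}\sqrt{1 - 4\nu_\xi^{2\alpha}}$; each entry is then available in closed form.

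I would split $\widehat{G}$ into three regimes. First, the trivial representation $\xi = \mathbf{1}$ has $\nu_\xi = 0$, giving the explicit solution $\widehat{u}(t,\mathbf{1}) = \widehat{u}_0(\mathbf{1}) + (1 - e^{-t})\widehat{u}_1(\mathbf{1})$, so that $|1 - e^{-t}| \leq t$ produces the pointwise bound $|\widehat{u}_0(\mathbf{1})|_{\mathrm{HS}} + t\,|\widehat{u}_1(\mathbf{1})|_{\mathrm{HS}}$. Second, the (necessarily finite) set of nontrivial $\xi$ with $0 < \nu_\xi^{2\alpha} < 1/4$: both characteristic roots are real and negative, uniformly bounded away from $0$, because the spectrum of $-\mathcal{L}$ is discrete on the compact group $G$ and admits a positive mass gap on nontrivial representations. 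Third, the bulk regime $\nu_\xi^{2\alpha} \geq 1/4$: the roots become complex conjugates $-\tfrac{1}{2} \pm i\beta_\xi$ with $\beta_\xi = \tfrac{1}{2}\sqrt{4\nu_\xi^{2\alpha} - 1} \asymp \nu_\xi^{\alpha}$, and writing the solution in cosine/sine form yields
\begin{equation*}
|\widehat{u}(t,\xi)|_{\mathrm{HS}} \lesssim e^{-t/2}\bigl(|\widehat{u}_0(\xi)|_{\mathrm{HS}} + \nu_\xi^{-\alpha}|\widehat{u}_1(\xi)|_{\mathrm{HS}}\bigr).
\end{equation*}

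Finally, I would assemble the three estimates in \eqref{111111} via Plancherel's identity $\|v\|_{L^2(G)}^{2} = \sum_{\xi \in \widehat{G}} d_\xi\|\widehat{v}(\xi)\|_{\mathrm{HS}}^{2}$. For the first estimate, the only contribution not already exponentially decaying comes from the trivial representation, which is exactly what produces the $t\,\|u_1\|_{L^2}$ term, while the other two regimes are absorbed into the $\|u_0\|_{L^2}$ term by the exponential decay. For the $(-\mathcal{L})^{\alpha/2}u$ and $\partial_t u$ estimates, multiplying the pointwise bounds by the Fourier multiplier $\nu_\xi^\alpha$ or differentiating in $t$ respectively annihilates the trivial-mode obstruction (since $\nu_{\mathbf{1}} = 0$ and $\partial_t \widehat{u}(t,\mathbf{1}) = e^{-t}\widehat{u}_1(\mathbf{1})$) and leaves bounds with pure $e^{-t/2}$ decay, which majorize $(1+t)^{-1/2}$ and $(1+t)^{-1}$ on $[0,\infty)$ respectively. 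The principal bookkeeping obstacle will be tracking the $\nu_\xi^{\pm\alpha}$ factors across the three regimes, in particular absorbing the $\nu_\xi^{-\alpha}|\widehat{u}_1(\xi)|_{\mathrm{HS}}$ term from the bulk regime into $\|u_1\|_{L^2}$ via the spectral gap, and matching the extra multiplier $\nu_\xi^{\alpha}$ on the left-hand side with the $H^\alpha_{\mathcal{L}}$-norm appearing on the right-hand side.
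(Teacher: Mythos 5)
Your proposal is correct and follows the same skeleton as the paper: group Fourier transform, explicit resolution of the resulting scalar ODEs with characteristic roots $-\tfrac12\pm\tfrac12\sqrt{1-4\lambda_\xi^{2\alpha}}$, a case split according to the size of the eigenvalue, and reassembly via Plancherel. The one genuine divergence is in the low-frequency regime. The paper keeps the trivial representation together with all $[\xi]$ satisfying $\lambda_\xi^{2\alpha}<1/16$ and proves the \emph{uniform-in-$\xi$} bound $\lambda_\xi^{2\alpha}e^{-2\lambda_\xi^{2\alpha}t}\lesssim (1+t)^{-1}$ (and its analogue for $\partial_t\widehat u$), which is what produces the polynomial rates $(1+t)^{-1/2}$ and $(1+t)^{-1}$ in the statement. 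You instead peel off the trivial representation explicitly and invoke the spectral gap of $-\mathcal L$ on nontrivial representations (only finitely many $[\xi]$ have $\lambda_\xi^{2\alpha}<1/4$, all with $\lambda_\xi>0$), so every nontrivial mode decays like $e^{-\delta t}$ for a fixed $\delta>0$. This is legitimate on a compact group and actually yields a \emph{stronger} conclusion --- exponential rather than polynomial decay for $\|(-\mathcal L)^{\alpha/2}u\|_{L^2}$ and $\|\partial_t u\|_{L^2}$ --- at the cost of using a structural feature (discreteness of the spectrum away from $0$) that the paper's uniform estimate deliberately avoids and that would not survive in settings where the spectrum accumulates at zero. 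Two small points to tidy up in a full write-up: at the threshold $\lambda_\xi^{2\alpha}=1/4$ the roots coincide (a $te^{-t/2}$ factor appears, absorbed by $te^{-t/2}\lesssim e^{-t/4}$), and just above the threshold $\beta_\xi=\tfrac12\sqrt{4\lambda_\xi^{2\alpha}-1}$ is \emph{not} comparable to $\lambda_\xi^{\alpha}$, so there you should use $|\sin(\beta_\xi t)|/\beta_\xi\le t$ rather than the asymptotic $\beta_\xi\asymp\lambda_\xi^{\alpha}$; the paper sidesteps this by splitting its high-frequency zone at $1/16$ and $1/4$ separately. Neither issue is a gap.
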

	%
	%
	

	Next we prove the local well-posedness of the   Cauchy problem   (\ref{eq0010})  in the energy evolution space  $\mathcal C\left([0,T],  H^\alpha_{\mathcal{L}}(G)\right)\cap\mathcal C^1\left([0,T],L^2(G)\right)$.  In this case, a Gagliardo-Nirenberg type inequality (proved in \cite{Gall}) will be used to estimate the power nonlinearity in $L^2(G)$.	 Indeed, we have the following local existence result.
	\begin{thm}\label{thm22}
		Let $0<\alpha <1$ and let  $G$ be a compact connected Lie group with the topological dimension $n.$ Assume that $n\geq 2[\alpha]+2$. Suppose that $(u_0, u_1)\in H^\alpha_{\mathcal L}(G) \times  L^2(G)$      and $p>1$ such that $p\leq\frac{n}{n-2\alpha}.$ Then there exists $T=T(\varepsilon)>0$ such that the Cauchy problem (\ref{eq0010})   admits a uniquely determined mild solution $$u\in \mathcal{C}([0,T],H^\alpha_{\mathcal L}(G))\cap \mathcal{C}^1([0,T],L^2(G)).$$
	\end{thm}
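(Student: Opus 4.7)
The plan is to apply Banach's fixed point theorem in the energy evolution space
$$X(T) := \mathcal{C}([0,T], H^\alpha_{\mathcal{L}}(G)) \cap \mathcal{C}^1([0,T], L^2(G))$$
equipped with the natural norm
$$\|u\|_{X(T)} := \sup_{t \in [0,T]} \bigl( \|u(t,\cdot)\|_{L^2(G)} + \|(-\mathcal{L})^{\alpha/2} u(t,\cdot)\|_{L^2(G)} + \|\partial_t u(t,\cdot)\|_{L^2(G)} \bigr).$$
By Duhamel's principle, a mild solution to (\ref{eq0010}) must satisfy $u = Nu$, where
$$Nu(t,\cdot) := u^{\mathrm{lin}}(t,\cdot) + \int_0^t E(t-s)\bigl[|u(s,\cdot)|^p\bigr]\, ds,$$
$u^{\mathrm{lin}}$ denotes the solution of the homogeneous linear Cauchy problem with data $\varepsilon(u_0,u_1)$, and $E(\tau)$ is the linear propagator sending an $L^2$ source $f$ to the solution at time $\tau$ of the homogeneous equation with Cauchy data $(0,f)$. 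I would then show that $N$ is a contractive self-map on a suitable closed ball of $X(T)$ for $T > 0$ sufficiently small.

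First, applying Proposition \ref{thm11} to $u^{\mathrm{lin}}$ with Cauchy data $\varepsilon(u_0,u_1)$ yields
$$\|u^{\mathrm{lin}}\|_{X(T)} \lesssim \varepsilon (1+T)\bigl(\|u_0\|_{H^\alpha_{\mathcal{L}}(G)} + \|u_1\|_{L^2(G)}\bigr),$$
since $(1+t)^{-1/2},(1+t)^{-1} \leq 1$ on $[0,T]$. The same propagator estimates applied slicewise to $E(t-s)[|u(s,\cdot)|^p]$ and then integrated over $[0,t]$ produce
$$\|Nu - u^{\mathrm{lin}}\|_{X(T)} \lesssim T(1+T) \sup_{s \in [0,T]} \bigl\||u(s,\cdot)|^p\bigr\|_{L^2(G)}.$$
To control the right-hand side, I invoke the Gagliardo--Nirenberg inequality on compact Lie groups from \cite{Gall}: the conditions $1 < p \leq n/(n-2\alpha)$ and $n \geq 2[\alpha]+2 = 2$ render the Sobolev exponent admissible and supply the interpolation
$$\bigl\||u(s,\cdot)|^p\bigr\|_{L^2(G)} = \|u(s,\cdot)\|_{L^{2p}(G)}^p \lesssim \|u(s,\cdot)\|_{L^2(G)}^{p(1-\theta)} \bigl\|(-\mathcal{L})^{\alpha/2}u(s,\cdot)\bigr\|_{L^2(G)}^{p\theta} \lesssim \|u\|_{X(T)}^p,$$
where $\theta = \tfrac{n}{2\alpha}\bigl(1-\tfrac{1}{p}\bigr)$. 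The analogous Lipschitz-type estimate
$$\bigl\||u|^p - |v|^p\bigr\|_{L^2(G)} \lesssim \bigl(\|u\|_{X(T)}^{p-1} + \|v\|_{X(T)}^{p-1}\bigr)\|u-v\|_{X(T)}$$
follows from the pointwise inequality $\bigl||a|^p-|b|^p\bigr| \lesssim (|a|^{p-1}+|b|^{p-1})|a-b|$ together with H\"older's inequality and the same Gagliardo--Nirenberg bound.

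Collecting these estimates yields
$$\|Nu\|_{X(T)} \leq C_0 \varepsilon(1+T) D_0 + C_1 T(1+T) \|u\|_{X(T)}^p, \quad D_0 := \|u_0\|_{H^\alpha_{\mathcal{L}}(G)} + \|u_1\|_{L^2(G)},$$
and an analogous bound for $\|Nu - Nv\|_{X(T)}$. Restricting $N$ to the ball $B_R = \{u \in X(T) : \|u\|_{X(T)} \leq R\}$ with $R := 2C_0 \varepsilon D_0$ and choosing $T = T(\varepsilon) > 0$ so small that $C_1 T(1+T) R^{p-1} \leq 1/2$, the map $N$ becomes a contraction of $B_R$ into itself, and Banach's fixed point theorem produces the unique mild solution in $X(T)$. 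The main technical hurdle is verifying the admissibility of the interpolation parameter $\theta \in [0,1]$ in the Gagliardo--Nirenberg step; a direct computation shows $\theta = 1$ corresponds precisely to the critical value $p = n/(n-2\alpha)$, so the hypotheses $p \leq n/(n-2\alpha)$ and $n \geq 2[\alpha]+2$ are exactly what is needed to close the contraction argument.
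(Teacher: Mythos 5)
Your proposal is correct and follows essentially the same route as the paper: the same energy space $X(T)$ and norm, the same Duhamel decomposition into a linear part controlled by Proposition \ref{thm11} and a nonlinear integral term controlled slicewise by the propagator estimates plus the Gagliardo--Nirenberg inequality (with the identical admissibility check $\theta=\tfrac{n}{2\alpha}(1-\tfrac1p)\le 1 \Leftrightarrow p\le \tfrac{n}{n-2\alpha}$), closed by Banach's fixed point theorem for small $T$. The only cosmetic difference is that you track the harmless $(1+T)$ factors explicitly, which the paper absorbs into its constants.
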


	\begin{rmk}
		Note that  the  restriction $p\leq\frac{n}{n-2\alpha }$   and $n \geq 2[\alpha]+2$  in the above theorem is necessary in order to apply Gagliardo-Nirenberg type inequality.
	\end{rmk}		
	
	Our next result is about the non-existence of global in-time solutions to (\ref{eq0010}) for any $p > 1$  regardless of the size of initial data. Before stating the blow-up result, we first introduce a suitable notion of energy solutions for the   Cauchy problem (\ref{eq0010}).
	\begin{defn}\label{eq332}
		Let $0<\alpha <1$ and $\left(u_{0}, u_{1}\right) \in H_{\mathcal{L}}^{\alpha}(G) \times L^{2}(G)$. For  any $T>0,$ we say that
		$$
		u \in \mathcal{C}\left([0, T), H_{\mathcal{L}}^{\alpha}(G)\right) \cap \mathcal{C}^{1}\left([0, T), L^{2}(G)\right) \cap L_{\text {loc }}^{p}([0, T) \times G)
		$$
		is an energy solution on $[0, T)$ to (\ref{eq0010}) if $u$ satisfies  the following  integral relation:
		\begin{align}\label{eq011}\nonumber
			&\int_{ {G}} \partial_{t} u(t, x) \phi(t, x)  {d} x-\int_{ {G}} u(t, x) \partial_{s} \phi(t, x) {d} x  +\int_{ {G}} u(t, x)   \phi (t, x) {d} x \\\nonumber
			&  +\varepsilon \int_{G} u_{0}(x) \partial_{s}\phi(0, x) \;d x -\varepsilon \int_{G} u_{1}(x) \phi(0, x) \;d x -\varepsilon \int_{G} u_{0}(x) \phi(0, x) \;d x \\&+\int_{0}^{t} \int_{G} u(s, x)\left(\partial^2_s\phi(s, x) +(-\mathcal{L})^\alpha \phi(s, x)+\partial_s\phi(s, x) \right) d x  ds
			=\int_{0}^{t} \int_{G}|u(s, x)|^{p} \phi(s, x) dxds
		\end{align}
		for any $\phi \in \mathcal{C}_{0}^{\infty}([0, T) \times G)$ and   any $t \in(0, T)$.
	\end{defn}  
	\begin{thm}\label{f6}
		Let  $0<\alpha <1$, $p>1$,  and let $\left(u_{0}, u_{1}\right) \in H_{\mathcal{L}}^{\alpha}(G) \times L^{2}(G)$ be nonnegative and nontrivial functions. Suppose 
		$$u \in \mathcal{C}\left([0, T), H_{\mathcal{L}}^{\alpha}(G)\right) \cap \mathcal{C}^{1}\left([0, T), L^{2}(G)\right) \cap L_{ {loc}}^{p}([0, T) \times G)$$ be an energy solution to the Cauchy problem   (\ref{eq0010})    with lifespan $T=T(\varepsilon)$. Then there exists a   constant $\varepsilon_{0}=\varepsilon_{0}\left(u_{0}, u_{1}, p\right)>0$ such that for any $\varepsilon \in\left(0, \varepsilon_{0}\right],$ the energy solution $u$ blows up in finite time.
		Furthermore, the lifespan $T$ satisfies the following   estimates
		\begin{align}\label{eq1112}
			T(\varepsilon) \leq 
			C \varepsilon^{1-p}.
		\end{align}
		
	\end{thm}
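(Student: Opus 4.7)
My plan is to follow the averaging / test-function strategy that Palmieri used in \cite{27} for the classical (non-fractional) damped wave on compact Lie groups, suitably adapted. The key structural fact is that on a compact Lie group the constant function lies in the kernel of $(-\mathcal{L})^\alpha$ (the trivial representation has Casimir eigenvalue $0$), so averaging in space reduces the PDE to an ODE inequality of the form $F'' + F' \geq F^p$, which can then be analyzed by a Kato-type iteration argument to produce a finite-time blow-up together with the lifespan bound $T(\varepsilon) \leq C\varepsilon^{1-p}$.

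Concretely, first I would set
\[
F(t) := \int_G u(t,x)\,dx,
\]
and insert test functions of the form $\phi(s,x) = \eta(s)$ with $\eta \in \mathcal{C}_0^\infty([0,T))$ into the integral relation \eqref{eq011}. Because $(-\mathcal{L})^\alpha \eta(s) \equiv 0$, the fractional-Laplacian term drops out, and a standard limiting procedure on $\eta$ yields, at least in the distributional sense,
\[
F''(t) + F'(t) = \int_G |u(t,x)|^p\,dx, \qquad F(0) = \varepsilon\!\int_G u_0\,dx, \qquad F'(0) = \varepsilon\!\int_G u_1\,dx.
\]
Since $u_0, u_1 \geq 0$ are nontrivial, $F(0) > 0$ and $F'(0) \geq 0$. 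Jensen's inequality applied with the normalized Haar measure (so that $|G|=1$) then gives the basic ODE inequality
\[
F''(t) + F'(t) \;\geq\; |F(t)|^p.
\]

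The next step is to verify that $F(t), F'(t) \geq 0$ on the whole lifespan, so that the inequality involves actual positive powers of $F$. This follows from the multiplier identity $(e^t F'(t))' = e^t(F'' + F') \geq 0$, which integrates to $F'(t) \geq e^{-t}F'(0) \geq 0$ and hence $F(t) \geq F(0) > 0$. A first bootstrap, plugging $F(s) \geq F(0)$ into $(e^tF')' \geq e^t F^p$ and integrating, yields the linear growth $F(t) \geq c\,\varepsilon^p\,t$ for $t \geq 1$. Iterating this procedure — each time substituting the latest polynomial lower bound back into $(e^tF')' \geq e^t F^p$ — produces a sequence of estimates
\[
F(t) \;\geq\; C_n\,\varepsilon^{p^n}\,t^{\beta_n}, \qquad \beta_n = \frac{p^n-1}{p-1},
\]
valid beyond an $n$-independent threshold in $t$. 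A careful tracking of the constants $C_n$ (the usual Kato-type bookkeeping) shows that whenever $t \geq K\varepsilon^{1-p}$ with $K$ sufficiently large, the right-hand side diverges as $n\to\infty$, contradicting the finiteness of $F(t)$ and forcing $T(\varepsilon) \leq C\varepsilon^{1-p}$.

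The main obstacle, conceptually minor but technically the heaviest part, is the bookkeeping in the iteration: one must track both the constants $C_n$ and the exponents $\beta_n$ precisely enough to extract the sharp $\varepsilon^{1-p}$ dependence, which is most transparent after rewriting $p^n\log\varepsilon + \beta_n\log t = \frac{p^n}{p-1}\log(t\varepsilon^{p-1}) - \frac{1}{p-1}\log t$. The presence of the fractional power $\alpha$ plays no role beyond the observation $(-\mathcal{L})^\alpha 1 = 0$, so the iteration is essentially identical to the $\alpha = 1$ case handled by Palmieri in \cite{27}; in particular, no Fourier-side decay estimates for the linear problem (such as Proposition \ref{thm11}) are needed here, in contrast to the global existence result for \eqref{2number311}.
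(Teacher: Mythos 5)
Your proposal is correct and follows essentially the same route as the paper: both average over $G$ to exploit that the trivial representation annihilates $(-\mathcal{L})^\alpha$, apply Jensen's inequality with the normalized Haar measure to obtain the ordinary differential inequality for $F(t)=\int_G u(t,x)\,dx$, use the exponential multiplier to establish positivity and the lower bound $F(t)\gtrsim\varepsilon$, and then run the Kato-type iteration (which the paper itself delegates to Subsections 3.1--3.2 of \cite{27}) with exactly the exponents $\beta_n=\frac{p^n-1}{p-1}$ you describe, yielding $T(\varepsilon)\leq C\varepsilon^{1-p}$. The only cosmetic difference is that the paper works with the time-integrated identity obtained by inserting $\phi\equiv 1$ into \eqref{eq011} and multiplies $U_0'+U_0\geq\cdots$ by $e^t$, whereas you differentiate first and use $(e^tF')'\geq e^tF^p$; these are equivalent.
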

	%
	\begin{rmk}
		\begin{itemize}
			\item[(i)] Here we note that the fractional Laplace-Beltrami operator  $(-\mathcal{L})^{\alpha}$ gives the classical Laplace-Beltrami operator    $-\mathcal{L}$ as $\alpha \rightarrow 1$ and all our results coincides with the results proved for the   Cauchy problem for the semilinear damped wave equation on compact Lie groups  in \cite{27}.
			
			\item[(ii)] 	From  Theorem \ref{f6}  one can see that  the sharp lifespan estimates  	for local in-time solutions to (\ref{eq0010}) is    independent of $\alpha, 0<\alpha<1$.  Thus,  for any $0<\alpha<1$, the lifespan estimates  for solutions  to the Cauchy problem for the fractional wave equation  (\ref{eq0010})  will be the same as the  sharp lifespan estimates for 
			the semilinear wave equation on compact Lie group $G$ proved in  \cite{27}.
		\end{itemize}

	\end{rmk}

	In the next part of the paper, we study the global existence of small data solutions for the nonlinear fractional wave equation with damping and mass and involving power type nonlinearity. More preciously,   we consider the   Cauchy problem  (\ref{2number311}), i.e.,
	\begin{align*} 
		\begin{cases}
			\partial^2_tu+\left( -\mathcal{L}\right)^\alpha u+b\partial_{t} u+m^2u=|u|^p, & x\in G, t>0,\\
			u(0,x)=u_0(x),  & x\in G,\\ \partial_tu(x,0)=u_1(x), & x\in G,
		\end{cases}
	\end{align*}
	where  $ p > 1$, $b,m^2$ are positive constants, $u_{0}(x)$ and $u_{1}(x)$ are two given functions on $G$.

	First, we prove the following $L^2$-decay estimates with exponential decay rates related to the time variable for the solution of the homogeneous Cauchy problem (\ref{2number311}) (when $f=0$).
	\begin{prop}\label{2thm11}
		Let $0<\alpha <1$.  Suppose that $(u_0, u_1)\in H_{\mathcal{L}}^\alpha(G) \times  L^2(G)$ and $u\in\mathcal{C}([0,\infty),H_{\mathcal{L}}^\alpha(G))\cap \mathcal{C}^1([0,\infty),L^2(G))$ be the solution to the homogeneous Cauchy problem
		\begin{align}\label{2eq1}
			\begin{cases}
				\partial^2_tu+(-\mathcal{L})^\alpha u+b\partial_{t} u+m^2u=0, & x\in G,~t>0,\\
				u(0,x)=u_0(x),  & x\in G,\\ \partial_tu(x,0)=u_1(x), & x\in G.
			\end{cases}
		\end{align}
		Then, $u$ satisfies the following $L^2( G)-L^2( G)$ estimates
		\begin{align}\label{2111111}
			\| u(t,\cdot)\|_{L^2( G)}  &\lesssim C A_{b, m^2}(t)( \left\|u_{0}\right\|_{L^{2}(G)}+t\left\|u_{1}\right\|_{L^{2}(G)}),\\\nonumber
			\left\|(-\mathcal{L})^{\alpha / 2} u(t, \cdot)\right\|_{L^{2}(G)}&\lesssim C A_{b, m^2}(t)   (\left\|u_{0}\right\|_{H_{\mathcal{L}}^{{\alpha }}(G)}^{2}+\left\|u_{1}\right\|_{L^{2}(G)}^{2}),\\\nonumber
			\|\partial_tu(t,\cdot)\|_{L^2( G)}&\lesssim  C A_{b, m^2}(t)   (\left\|u_{0}\right\|_{H_{\mathcal{L}}^{{\alpha }}(G)}^{2}+\left\|u_{1}\right\|_{L^{2}(G)}^{2}).
		\end{align}
		for any $t\geq 0$, where  $C$ is a positive multiplicative constant  and the decay function $A_{b, m^2}(t)$ is given by
		$$
		A_{b, m^2}(t) \doteq\left\{\begin{array}{ll}
			{e}^{-\frac{b}{2} t} & \text { if } b^2<4 m^2, \\
			(t+1)  {e}^{-\frac{b}{2} t} & \text { if } b^2=4 m^2, \\
			{e}^{\left(-\frac{b}{2}+\sqrt{\frac{b^2}{4}-m^2}\right) t} & \text { if } b^2>4 m^2.
		\end{array}\right.
		$$
	\end{prop}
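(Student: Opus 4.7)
The plan is to mimic the strategy that (as suggested by the placement of Proposition \ref{thm11} and by the architecture typical of this circle of results) underlies the linear $L^2$--$L^2$ estimate for the pure damped problem, namely: diagonalize via the group Fourier transform, solve a scalar ODE componentwise, bound the fundamental solutions uniformly in the representation parameter, and reassemble by Plancherel.

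More concretely, I would first apply the group Fourier transform on $G$ to the homogeneous Cauchy problem \eqref{2eq1}. Since $(-\mathcal L)^\alpha$ is diagonalized by the Peter--Weyl decomposition, for each $[\xi]\in \widehat G$ with eigenvalue $\lambda_\xi^2$ of $-\mathcal L$, the matrix-valued Fourier coefficient $\widehat u(t,\xi)$ solves the second-order linear ODE
\begin{equation*}
\partial_t^2 \widehat u(t,\xi) + b\,\partial_t \widehat u(t,\xi) + \bigl(\lambda_\xi^{2\alpha}+m^2\bigr)\widehat u(t,\xi) = 0,
\end{equation*}
with initial data $\widehat u(0,\xi)=\widehat{u_0}(\xi)$, $\partial_t\widehat u(0,\xi)=\widehat{u_1}(\xi)$. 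Its characteristic roots are
\begin{equation*}
\mu_{\pm}(\xi) = -\tfrac{b}{2} \pm \sqrt{\tfrac{b^2}{4}-m^2-\lambda_\xi^{2\alpha}},
\end{equation*}
and $\widehat u(t,\xi)$ is an explicit linear combination of $e^{\mu_+(\xi)t}$ and $e^{\mu_-(\xi)t}$ (with the usual degenerate-root modification $(1+t)e^{-bt/2}$ when $\mu_+=\mu_-$). This reduces the whole problem to producing $\xi$-uniform pointwise bounds on these exponentials and on their coefficients.

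Next I would establish the pointwise bound $|e^{\mu_\pm(\xi)t}|\lesssim A_{b,m^2}(t)$ through a short case analysis on the sign of $\tfrac{b^2}{4}-m^2-\lambda_\xi^{2\alpha}$. When this quantity is negative, $\mu_\pm(\xi)=-\tfrac{b}{2}\pm i\sqrt{m^2+\lambda_\xi^{2\alpha}-\tfrac{b^2}{4}}$, so $|e^{\mu_\pm(\xi)t}|=e^{-bt/2}$ uniformly in $\xi$. When the quantity is nonnegative (which can happen only if $b^2\geq 4m^2$ and, in that case, only for representations with $\lambda_\xi^{2\alpha}\leq \tfrac{b^2}{4}-m^2$), the root $\mu_+(\xi)$ lies in $[-\tfrac{b}{2},-\tfrac{b}{2}+\sqrt{\tfrac{b^2}{4}-m^2}\,]$, and a direct estimate of the fundamental-system coefficients (using that $\mu_+(\xi)-\mu_-(\xi)$ is bounded away from $0$ uniformly away from the critical case, and a Taylor expansion at the critical case $b^2=4m^2$ handling the resonant double root) yields the bound $|\widehat u(t,\xi)|\lesssim A_{b,m^2}(t)\bigl(|\widehat{u_0}(\xi)|+t|\widehat{u_1}(\xi)|\bigr)$. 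The main technical obstacle is precisely this uniformity near the transition where $\mu_+(\xi)$ and $\mu_-(\xi)$ nearly coincide; the standard remedy is to split into the ``small $\xi$'' and ``large $\xi$'' regimes and to interpolate the two explicit fundamental solutions through the regularized form $\tfrac{e^{\mu_+(\xi)t}-e^{\mu_-(\xi)t}}{\mu_+(\xi)-\mu_-(\xi)}$, which remains bounded (by $te^{-bt/2}$) as the discriminant tends to zero.

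Finally, I would differentiate the ODE representation of $\widehat u(t,\xi)$ to get analogous pointwise bounds for $\partial_t\widehat u(t,\xi)$ and multiply by $\lambda_\xi^{\alpha}$ to get bounds for the Fourier side of $(-\mathcal L)^{\alpha/2}u$, in each case absorbing the factor $\lambda_\xi^{\alpha}$ into the initial data via the $H^\alpha_{\mathcal L}(G)$-norm. Squaring, multiplying by $d_\xi$, summing over $[\xi]\in\widehat G$ with the Hilbert--Schmidt norm, and invoking the Plancherel identity on $G$, I would then obtain the three claimed estimates \eqref{2111111} at once. The last step is routine once the pointwise multiplier bounds are in hand, so the crux of the argument really sits in the uniform-in-$\xi$ analysis of the characteristic roots outlined above.
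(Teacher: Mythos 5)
Your proposal follows essentially the same route as the paper: group Fourier transform reducing to scalar ODEs with characteristic roots $-\tfrac{b}{2}\pm\sqrt{\tfrac{b^2}{4}-\lambda_\xi^{2\alpha}-m^2}$, a case analysis on the sign of $b^2-4m^2$ with the regularized fundamental solution $\tfrac{e^{\mu_+t}-e^{\mu_-t}}{\mu_+-\mu_-}$ (the paper's $A_1(t,\xi)$) handling the degenerate root, and reassembly via Plancherel. The argument is correct and matches the paper's proof in all essentials.
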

	Using these  above $L^2$-decay estimates, we will prove the global existence of small data solutions  to the  nonlinear fractional   Cauchy problem   (\ref{2number311})  in the energy evolution space  $\mathcal C\left([0,\infty),  H^\alpha_{\mathcal{L}}(G)\right)\cap\mathcal C^1\left([0,\infty),L^2(G)\right)$.  In this case, a Gagliardo-Nirenberg type inequality (proved in \cite{Gall}) will be used to estimate the power nonlinearity in $L^2(G)$.	The following result is about the global existence of the mild solution of the Cauchy problem (\ref{2number311}). For the definition of the mild solution, see subsection \ref{sec4}.

	\begin{thm}\label{2thm22}
		Let $0<\alpha <1$ and let  $G$ be a compact connected Lie group with the topological dimension $n.$ Assume that $n\geq 2[\alpha]+2$. Suppose that $(u_0, u_1)\in H^\alpha_{\mathcal L}(G) \times  L^2(G)$      and $p>1$ such that $p\leq\frac{n}{n-2\alpha}.$ Then there exists $\varepsilon_0>0$ such that for any  $\|(u_0, u_1)\|_{ H^\alpha_{\mathcal L}(G) \times  L^2(G)}\leq \varepsilon_{0}$,  	the Cauchy problem (\ref{2number311})   admits a uniquely determined mild solution $$u\in \mathcal{C}([0,\infty),H^\alpha_{\mathcal L}(G))\cap \mathcal{C}^1([0,\infty),L^2(G)).$$
		
	\end{thm}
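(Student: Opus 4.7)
The plan is to realise the solution of (\ref{2number311}) as a fixed point of Duhamel's integral formulation in a suitable weighted Banach space, and then close the argument via Banach's contraction principle, using the smallness of the Cauchy data. I would write the mild solution as
\begin{equation*}
u(t,\cdot) = u^{\mathrm{lin}}(t,\cdot) + \int_0^t E_1(t-s,\cdot) \ast_{(x)} |u(s,\cdot)|^p\, ds,
\end{equation*}
where $u^{\mathrm{lin}}$ denotes the solution of the homogeneous problem (\ref{2eq1}) with data $(u_0,u_1)$ and $E_1(t,\cdot)$ is the group-convolution kernel for the initial-velocity propagator, both built in Proposition \ref{2thm11} via Fourier analysis on $G$. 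On the evolution space $X := \mathcal{C}([0,\infty),H^\alpha_{\mathcal L}(G))\cap \mathcal{C}^1([0,\infty),L^2(G))$ I would introduce the weighted norm
\begin{equation*}
\|u\|_X := \sup_{t\geq 0} A_{b,m^2}(t)^{-1}\Bigl( \|u(t,\cdot)\|_{L^2(G)} + \|(-\mathcal{L})^{\alpha/2} u(t,\cdot)\|_{L^2(G)} + \|\partial_t u(t,\cdot)\|_{L^2(G)}\Bigr),
\end{equation*}
with the polynomial factor $t$ appearing in the first bound of (\ref{2111111}) absorbed into the exponential part of $A_{b,m^2}(t)$ up to a harmless constant. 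Proposition \ref{2thm11} immediately gives $\|u^{\mathrm{lin}}\|_X \lesssim \|(u_0,u_1)\|_{H^\alpha_{\mathcal L}(G)\times L^2(G)}$.

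The next step controls the Duhamel term. By the Gagliardo-Nirenberg inequality on compact Lie groups from \cite{Gall}, under the assumptions $n\geq 2[\alpha]+2$ and $p\leq \frac{n}{n-2\alpha}$, one has
\begin{equation*}
\|v\|_{L^{2p}(G)} \lesssim \|(-\mathcal{L})^{\alpha/2} v\|_{L^2(G)}^{\theta}\, \|v\|_{L^2(G)}^{1-\theta}, \qquad \theta = \tfrac{n(p-1)}{2\alpha p} \in [0,1],
\end{equation*}
so that $\bigl\||u(s,\cdot)|^p\bigr\|_{L^2(G)} = \|u(s,\cdot)\|_{L^{2p}(G)}^p \lesssim A_{b,m^2}(s)^p\, \|u\|_X^p$. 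Applying Proposition \ref{2thm11} with $|u(s,\cdot)|^p$ acting as source data at time level $s$ and then integrating, the three norms making up the $X$-norm of $\int_0^t E_1(t-s)\ast |u(s)|^p\, ds$ are bounded by $\|u\|_X^p \int_0^t A_{b,m^2}(t-s)\, A_{b,m^2}(s)^p\, ds$. The key convolution estimate
\begin{equation*}
\int_0^t A_{b,m^2}(t-s)\, A_{b,m^2}(s)^p\, ds \lesssim A_{b,m^2}(t), \qquad t\geq 0,
\end{equation*}
follows from the strictly negative exponential rate of $A_{b,m^2}$ together with $p>1$, since $A_{b,m^2}(s)^{p-1}$ is then integrable over $[0,\infty)$; in the borderline regime $b^2=4m^2$ the additional polynomial factor $(t+1)$ is absorbed using the same exponential margin.

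The contraction property uses the elementary inequality $\bigl||u|^p-|v|^p\bigr| \lesssim (|u|^{p-1}+|v|^{p-1})|u-v|$ combined with H\"older's inequality and one more application of Gagliardo-Nirenberg to give
\begin{equation*}
\|Nu-Nv\|_X \lesssim \bigl(\|u\|_X^{p-1}+\|v\|_X^{p-1}\bigr)\|u-v\|_X,
\end{equation*}
where $N$ is the Duhamel operator. Combined with the self-map estimate $\|Nu\|_X \lesssim \varepsilon_0 + \|u\|_X^p$ produced by the previous paragraph, choosing $R\sim \varepsilon_0$ small enough makes $N$ a contraction on the closed ball of radius $R$ in $X$, and its unique fixed point is the desired global-in-time mild solution. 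The main obstacle I anticipate is the convolution bound in the critical case $b^2=4m^2$: the polynomial factor $(t+1)$ coming from $A_{b,m^2}$ must be absorbed carefully against the ambient exponential so as not to degrade the decay rate; away from this borderline the estimate reduces to a standard convolution inequality for pure exponentials.
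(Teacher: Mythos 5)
Your proposal is correct and follows essentially the same route as the paper: the same Duhamel fixed-point formulation in the space $X$ weighted by $A_{b,m^2}(t)^{-1}$, the same use of Proposition \ref{2thm11} for the linear part, the same Gagliardo--Nirenberg estimate (\ref{eq34}) with $\theta=\frac{n(p-1)}{2\alpha p}$ to control $\||u|^p\|_{L^2(G)}$, and the same convolution bound $\int_0^t A_{b,m^2}(t-s)A_{b,m^2}(s)^p\,ds\lesssim A_{b,m^2}(t)$ whose uniformity in $t$ (thanks to the strictly negative exponential rate, including the borderline $b^2=4m^2$) is exactly what the paper exploits to make the contraction argument global in time.
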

	
	\begin{rmk}
		Here we note that the fractional Laplace-Beltrami operator  $(-\mathcal{L})^{\alpha}$ can be reduced to the classical Laplace-Beltrami operator    $-\mathcal{L}$ as $\alpha \rightarrow 1$ and  Proposition \ref{2thm11} and  Theorem \ref{2thm22} coincides with the results proved for the   Cauchy problem for the fractional wave equation with damping and mass on compact Lie groups in \cite{28}.
	\end{rmk}
	\begin{rmk}
		We note that in the statement of Theorem \ref{2thm22}, the  restriction on the upper bound for the exponent $p$ which is $p\leq\frac{n}{n-2\alpha }$   is necessary in order to apply Gagliardo-Nirenberg type inequality (\ref{eq34}) in  (\ref{f}). Also, the other restriction $n \geq 2[\alpha]+2$ is made to fulfill the assumptions for the employment of such inequality.
	\end{rmk}	
	
	Before studying the nonhomogeneous Cauchy problem (\ref{eq0010}) and \eqref{2number311} we first deal with the corresponding homogeneous problem, i.e., when $f=0$. Particularly,  using the group Fourier transform with respect to the spatial variable,  we determine  $L^{2}-L^{2} $ estimates for the solution of the homogeneous fractional damped wave equation on the compact Lie group $G$.  
	Once we have these estimates,  applying a Gagliardo-Nirenberg type inequality on compact Lie groups \cite{27, 28, 31} (see also \cite{Gall}  for  Gagliardo-Nirenberg type inequality on a more general frame of connected Lie groups),   we prove the local well-posedness result for  (\ref{eq0010}) and the global in time solution for  (\ref{2number311}).


	Apart from the introduction, this paper is organized as follows. In Section \ref{sec2},  we recall the Fourier analysis on compact Lie groups which will be used frequently throughout the paper for our approach. In Section \ref{sec3},   first,
	we show an appropriate decomposition of the propagators for the nonlinear equation in the Fourier space. Further,  by recalling the notion of mild solutions in our framework,  we prove Theorem \ref{thm22}, the local existence result,  by deriving some  $L^{2}-L^{2}$ estimates for the solution of the homogeneous fractional wave equation on the compact Lie group $G$.   
	Moreover,  under certain conditions on the initial data, a finite time blow-up result is established.    In Section \ref{sec6}, we prove Theorem \ref{2thm22}, the global existence for the mild solution,   by deriving some  $L^{2}-L^{2}$ estimates for the solution of the homogeneous fractional wave equation with damping and mass (\ref{2number311}) on the compact Lie group $G$.

	\section{Preliminaries: Analysis on compact Lie groups} \label{sec2}
	In this section, we recall some basics of Fourier analysis on compact Lie groups to make the manuscript self-contained. A complete account of the representation theory of the compact Lie groups can be found in \cite{garetto, RT13, RuzT}. However, we mainly adopt the notation and terminology given in \cite{RuzT}.
	
	\subsection{Notations} 
	Throughout the article,  we use the following notations:  
	
	\begin{itemize}
		\item $f \lesssim g:$\,\,There exists a positive constant $C$ (whose value may change from line to line in this manuscript) such that $f \leq C g.$
		\item $G:$ Compact Lie group.
		\item $dx:$ The normalized Haar measure on the compact group $G.$
		\item $\mathcal{L}:$ The Laplace-Beltrami operator on $G.$
		\item $\mathbb{C}^{d \times d}:$ The set of matrices with complex entries of order $d.$
		\item $ \operatorname{Tr}(A)=\sum_{j=1}^{d} a_{j j}:$ The trace  of the matrix $A=\left(a_{i j}\right)_{1 \leq i, j \leq  d} \in \mathbb{C}^{d \times d}.$
		\item $I_{d} \in \mathbb{C}^{d \times d}:$  The identity matrix of order $d.$
	\end{itemize}
	
	\subsection{Representation theory on compact Lie groups}
	Let us first recall the definition of a representation of a compact group $G.$ A unitary representation  of $G$ is a pair $(\xi, \mathcal{H})$ such that the map $\xi:G \rightarrow U(\mathcal{H}),$ where $U(\mathcal{H})$ denotes the set of  unitary operators on complex Hilbert space $\mathcal{H},$  such that it satisfies following properties:
	\begin{itemize}
		\item The map $\xi$ is a group homomorphism, that is, $\xi(x y)=\xi(x)\xi(y).$
		\item The mapping $\xi:G \rightarrow U(\mathcal{H})$ is continuous with respect to strong operator topology (SOT) on $U(\mathcal{H}),$  that is, the map $g \mapsto \xi(g)v$ is continuous for every $v \in \mathcal{H}.$ 
	\end{itemize}
	The Hilbert space $\mathcal{H}$ is called the representation space. To avoid any confusion, we  represent    a representation $(\xi, \mathcal{H})$ of $G$ by $\xi.$ 
	Two unitary representations $\xi, \eta$ of ${G}$ are called  equivalent if there exists an unitary operator, namely intertwiner,  $T$ such that $T \xi(x)=\eta(x) T$ for any $x \in {G}$. 
	An intertwiner is an irreplaceable tool in the theory of representation of compact groups and is helpful in the classification of representation.
	A (linear) subspace $V \subset \mathcal{H}$ is said to be invariant under the unitary representation $\xi$ of $G$ if $\xi(x) V \subset V$, for any $x \in {G}$. An irreducible unitary representation $\xi$ of $G$ is a representation such that the only closed and $\xi$-invariant subspaces of $\mathcal{H}$  are trivial once, that is, $\{0\}$ and the full space $ \mathcal{H}$. 
	
	The set of all equivalence classes  $[\xi]$  of continuous irreducible unitary representations of $G$ is denoted by $\widehat{G}$ and called the unitary dual of $G.$ Since $G$ is compact, $\widehat{G}$ is a discrete set. It is known that an irreducible unitary representation $\xi$ of $G$ is finite-dimensional, i.e., the Hilbert space $\mathcal{H}$ is finite-dimensional, say, $d_\xi$. Therefore, if we  choose a basis $\mathfrak{B}:=\{e_1,e_2,\ldots, e_{d_\xi}\}$ for the representation space $\mathcal{H}$ of $\xi$, we can identify $\mathcal{H}$ as $\mathbb{C}^{d_\xi}$ and consequently, we can view $\xi$ as a matrix-valued function $\xi: G \rightarrow U(\mathbb{C}^{d_{\xi} \times d_{\xi}})$, where $U(\mathbb{C}^{d_{\xi} \times d_{\xi}})$ denotes the space of all unitary matrices. The matrix coefficients $\xi_{ij}$ of the representation $\xi$ with respect to $\mathfrak{B}$ are given by $\xi_{ij}(x):=\langle \xi(x) e_j, e_i \rangle$, for all $i, j \in \{1,2, \ldots, d_\xi\}.$  It follows from the Peter-Weyl theorem that the set
	$$
	\left\{\sqrt{d_{\xi}} \xi_{i j}: 1 \leq i, j \leq d_{\xi},[\xi] \in \widehat{G}\right\}
	$$
	forms an orthonormal basis of $L^{2}(G)$.

	\subsection{Fourier analysis on compact Lie groups } Let $G$ be a compact Lie group.
	The group Fourier transform of $f \in L^1(G)$ at $\xi\in \widehat{G},$ denoted by $\widehat{f}(\xi),$ is defined by
	$$
	\widehat{f}(\xi):=\int_{G} f(x) \xi(x)^{*} d x,
	$$
	where $dx$ is the normalized Haar measure on $G$. It is apparent from the definition that  $\widehat{f}(\xi)$ is matrix-valued and therefore, this definition can be interpreted in weak sense, i.e.,  for $u,v \in \mathcal{H},$ 
	$$ \langle	\widehat{f}(\xi)u, v \rangle:=\int_{G} f(x) \langle \xi(x)^{*}u, v \rangle d x.$$
	
	It follows from  the Peter-Weyl theorem that, for every $f \in L^2(G),$ we  have the following Fourier series representation:
	$$
	f(x)=\sum_{[\xi] \in \widehat{G}} d_{\xi} \operatorname{Tr}(\xi(x) \widehat{f}(\xi)).
	$$
	
	The Plancherel identity for the group Fourier transform on  $G$ takes the following  form
	\begin{align}\label{eq002}
		\|f\|_{L^{2}(G)}=\left(\sum_{[\xi] \in \widehat{G}} d_{\xi}\|\widehat{f}(\xi)\|_{\mathrm{HS}}^{2}\right)^{1 / 2}:=\|\widehat{f}\|_{\ell^2(\widehat{G})},
	\end{align}
	where $\|\cdot\|_{\mathrm{HS}}$ denotes the Hilbert-Schmidt norm of a matrix $A:=(a_{ij}) \in \mathbb{C}^{d_
		\xi \times d_\xi}$ defined as
	$$	\|A\|_{\mathrm{HS}}^{2}=\operatorname{Tr}\left( A A^{*}\right)=\sum_{i, j=1}^{d_{\xi}}|a_{ij}|^2.$$
	We would like to emphasize here that the  Plancherel identity is one of the crucial tools to establish $L^2$-estimates of the solution to PDEs.

	Let $\mathcal{L}$ be the  Laplace-Beltrami operator on $G$. 	It is important to understand the action of the group Fourier transform on the Laplace–Beltrami operator $\mathcal{L}$ for developing the machinery of the proofs. For  $[\xi] \in \widehat{{G}}$,   the matrix elements   $\xi_{i j}$,  are  the eigenfunctions of $\mathcal{L}$ with the same   eigenvalue $-\lambda_{\xi}^{2}$. In other words, we have,    for any $ x \in {G},$
	$$
	-\mathcal{L} \xi_{i j}(x)=\lambda_{\xi}^{2} \xi_{i j}(x),   \qquad \text{for all } i, j \in\left\{1, \ldots, d_{\xi}\right\}.
	$$
	The symbol $\sigma_{\mathcal{L}}$ of  the  Laplace-Beltrami operator $\mathcal{L}$ on $G$ is given by 
	\begin{align}\label{symbol}
		\sigma_{\mathcal{L}}(\xi)=-\lambda_{\xi}^{2} I_{d_{\xi}},
	\end{align}
	for any $[\xi] \in \widehat{{G}}$ and   therefore, the following holds:  $$\widehat{\mathcal{L} f}(\xi)=\sigma_{\mathcal{L}}(\xi) \widehat{f}(\xi)=-\lambda_{\xi}^{2} \widehat{f}(\xi)$$ for any $[\xi] \in \widehat{ G}$. 
	
	For $s>0,$ the   Sobolev space $H_{\mathcal{L}}^s\left(G\right)$ of order $s$ is defined as follows: 
	$$H_{\mathcal{L}}^s(G):=\left\{u \in L^{2}(G):\|u\|_{H_{\mathcal{L}}^s(G)}<+\infty\right\},$$ where $\|u\|_{H_{\mathcal{L}}^s(G)}=\|u\|_{L^{2}(G)}+\left\|(-\mathcal{L})^{s / 2} u\right\|_{L^{2}({G})}$ and    $(-\mathcal{L})^{s / 2} $  is    defined in terms of the group Fourier transform by the follwoing formula
	$$(-\mathcal{L})^{s / 2} f :=\mathcal{F}^{-1}\left(\lambda_{\xi}^{s }(\mathcal{F} f)\right),  	\quad  \text{for all $[\xi] \in \widehat{{G}}$}.$$

	

	Further,  using Plancherel identity,  for any $s>0$, we have that
	$$
	\left\|(-\mathcal{L})^{s / 2} f\right\|_{L^{2}({G})}^{2}=\sum_{[\xi] \in \widehat{{G}}} d_{\xi} \lambda_{\xi}^{2 s}\|\widehat{f}(\xi)\|_{\mathrm{HS}}^{2}
	.	 	$$

	
	\section{A local existance result}\label{sec3}
	In this section,     we study the local well-posedness of the   Cauchy problem   (\ref{2number31}), i.e., 
	\begin{align*}  
		\begin{cases}
			\partial^2_tu+(-\mathcal{L})^\alpha u+	\partial_tu =|u|^p, & x\in G,t>0,\\
			u(0,x)=\varepsilon u_0(x),  & x\in G,\\ \partial_tu(x,0)=\varepsilon u_1(x), & x\in G,
		\end{cases}
	\end{align*}
	where   $u_{0}(x)$ and $u_{1}(x)$ are two given functions on $G$  and   $\varepsilon$ is a positive constant describing the smallness of the Cauchy data. 
	
	\subsection{Fourier multiplier expressions and $L^2(G)-L^2(G)$ estimates 
	} \label{sec3.1}
	In this subsection, we derive $L^2(G)– L^2(G)$ estimates for the solutions to the homogeneous problem     (\ref{eq1}).
	We  employ the group Fourier transform on the compact group $G$ with respect to  the space variable $x$  together with the Plancherel identity in order to      estimate     $L^2$-norms of  $u(t, ·), (-\mathcal{L})^{\frac{\alpha}{2}}u(t, \cdot)$, and $\partial_{t}u(t, ·)$. 
	
	Let $u$ be a solution to (\ref{eq1}). Let $\widehat{u}(t, \xi)=(\widehat{u}(t, \xi)_{kl})_{1\leq k, l\leq d_\xi}\in \mathbb{C}^{d_\xi\times d_\xi}, [\xi]\in\widehat{ G}$ denote the Fourier transform of $u$  with respect to the $x $ variable. Invoking the group Fourier transform with respect to $x$ on   (\ref{eq1}), we deduce that $\widehat{u}(t, \xi)$ is  a solution to the following  Cauchy problem for the system of ODE's (with the size of the system that depends on the representation $\xi$)
	\begin{align}\label{eq6661}
		\begin{cases}
			\partial^2_t\widehat{u}(t,\xi)+(-	\sigma_{\mathcal{L}}(\xi))^\alpha \widehat{u}(t,\xi) +\partial_t \widehat{u}(t,\xi)=0,& [\xi]\in\widehat{ G},~t>0,\\ \widehat{u}(0,\xi)=\widehat{u}_0(\xi), &[\xi]\in\widehat{ G},\\ \partial_t\widehat{u}(0,\xi)=\widehat{u}_1(\xi), &[\xi]\in\widehat{ G},
		\end{cases} 
	\end{align}
	where  $\sigma_{\mathcal{L}}$	is the symbol of  the  operator operator $\mathcal{L}$.  Using the identity (\ref{symbol}),  the   system  (\ref{eq6661}) can be written in the form of $d_\xi^2$ independent   ODE's, namely,
	\begin{align}\label{eqq7}
		\begin{cases}
			\partial^2_t\widehat{u}(t,\xi)_{kl}+	\partial_t\widehat{u}(t,\xi)_{kl}+ \lambda_\xi^{2\alpha }  \widehat{u}(t,\xi)_{kl}= 0,& [\xi]\in\widehat{ G},~t>0,\\ \widehat{u}(0,\xi)_{kl}=\widehat{u}_0(\xi)_{kl}, &[\xi]\in\widehat{ G},\\ \partial_t\widehat{u}(0,\xi)_{kl}=\widehat{u}_1(\xi)_{kl}, &[\xi]\in\widehat{ G},
		\end{cases}
	\end{align}
	for all $k,l\in\{1,2,\ldots,d_\xi\}.$ 	Then, the characteristic equation of (\ref{eqq7}) is given by
	\[\lambda^2+	\lambda+\lambda_\xi^{2\alpha } =0,\]
	and consequently the characteristic roots are   $\lambda=-\frac{1}{2}\pm \frac{\sqrt{1-4\lambda_\xi^{2\alpha}}}{2}  $. 
	%
	Thus the solution to the   homogeneous  problem   (\ref{eqq7}) is given by 
	\begin{align}\label{number2}\nonumber
		\widehat{u}(t,\xi)_{kl}&=e^{-\frac{t}{2}}A_0(t, \xi) \widehat{u}_0(\xi)_{kl}+e^{-\frac{t}{2}}A_1(t, \xi) \left(\widehat{u}_1(\xi)_{kl}+\frac{1}{2}\widehat{u}_0(\xi)_{kl}\right)\\
		&=e^{-\frac{t}{2}}\left[A_0(t, \xi)+\frac{A_1(t, \xi)}{2}\right] \widehat{u}_0(\xi)_{kl}+e^{-\frac{t}{2}}A_1(t, \xi)  \widehat{u}_1(\xi)_{kl},
	\end{align}
	where
	\begin{align}\label{number1}
		A_0(t, \xi)=\begin{cases}
			\cosh \left(\frac{1}{2}\sqrt{ 1-4\lambda_\xi^{2\alpha}}~~t \right)& \text{if } 4\lambda_\xi^{2\alpha}< 1,\\
			1& \text{if } 4\lambda_\xi^{2\alpha}=1,\\
			\cos\left(\frac{1}{2}\sqrt{ 4\lambda_\xi^{2\alpha}-1}~~t \right)& \text{if } 4\lambda_\xi^{2\alpha}> 1,\\
	\end{cases} 	\end{align}
	and
	\begin{align}\label{number3}
		A_1(t, \xi)=\begin{cases}
			\frac{	2\sinh \left(\frac{1}{2}\sqrt{ 1-4\lambda_\xi^{2\alpha}}~~t \right)}{\sqrt{ 1-4\lambda_\xi^{2\alpha}}}& \text{if } 4\lambda_\xi^{2\alpha}< 1,\\
			t& \text{if } 4\lambda_\xi^{2\alpha}=1,\\
			\frac{	\sin\left(\frac{1}{2}\sqrt{ 4\lambda_\xi^{2\alpha}-1}~~t \right)}{\sqrt{ 4 \lambda_\xi^{2\alpha}-1}}& \text{if } 4\lambda_\xi^{2\alpha}> 1.
		\end{cases} 
	\end{align}
	We 	notice that $A_0(t, \xi) = \partial_t A_1(t, \xi)$ for any $[\xi] \in \widehat{ G}$ and
	\begin{align}\label{number22}
		\partial_{t}	\widehat{u}(t,\xi)_{kl}=-e^{-\frac{t}{2}}A_1(t, \xi)\lambda_\xi^{2\alpha} \widehat{u}_0(\xi)_{kl}+e^{-\frac{t}{2}} \left[ A_0(t, \xi)-\frac{1}{2}A_1(t, \xi)\right] \widehat{u}_1(\xi)_{kl}.
	\end{align}

	To simplify the presentation, we introduce the following partition of the unitary dual $\widehat{ G}$ as:
	\begin{align*}
		\mathcal{R}_1&=\{[\xi]\in\widehat{ G}:0\leq  \lambda_\xi^{2\alpha }<\frac{1}{16}\}, \\
		\mathcal{R}_2&=\{[\xi]\in\widehat{ G}:\lambda_\xi^{2\alpha }\geq \frac{1}{16}\}. 
	\end{align*}

	Note that the choice of $ \frac{1}{16}$ as a threshold in the previous definitions is irrelevant since our goal is to separate 	$0$ (which is an eigenvalue for the continuous irreducible unitary representation $1 : x \in G \to  1 \in  \mathbb{C}$)  from the other eigenvalues. Now we        estimate     $L^2$-norms of  $u(t, ·), (-\mathcal{L})^{\frac{\alpha}{2}}u(t, \cdot)$, and $\partial_{t}u(t, ·)$.

	\noindent\textbf{Estimate on $\mathcal{R}_1:$} 
	In this case, 
	$ |A_0(t, \xi)|\leq \cosh  \frac{t}{2}$ and $ |A_1(t, \xi)|\leq \sin  \frac{t}{2}$. Therefore from \eqref{number2}, we have
	\begin{align}\label{eqd1}
		| \widehat{u}(t,\xi)_{kl}| \lesssim   | \widehat{u}_0(\xi)_{kl}| +| \widehat{u}_1(\xi)_{kl}| .
	\end{align}
	Again for $[\xi] \in \mathcal{R}_1$, we have
	\begin{align*}
		A_0(t, \xi)+\frac{A_1(t, \xi)}{2}& = \frac{e^{\frac{1}{2}\sqrt{ 1-4\lambda_\xi^{2\alpha}}~~t }+e^{-\frac{1}{2}\sqrt{ 1-4\lambda_\xi^{2\alpha}}~~t }}{2}+ \frac{e^{\frac{1}{2}\sqrt{ 1-4\lambda_\xi^{2\alpha}}~~t }-e^{-\frac{1}{2}\sqrt{ 1-4\lambda_\xi^{2\alpha}}~~t }}{2\sqrt{ 1-4\lambda_\xi^{2\alpha}}}\\&=\left(
		\frac{1}{2}+\frac{1}{4\sqrt{ 1-4\lambda_\xi^{2\alpha}}} \right)e^{\frac{1}{2}\sqrt{ 1-4\lambda_\xi^{2\alpha}}~~t }+\left(
		\frac{1}{2}-\frac{1}{2\sqrt{ 1-4\lambda_\xi^{2\alpha}}} \right)e^{-\frac{1}{2}\sqrt{ 1-4\lambda_\xi^{2\alpha}}~~t }\\
		&\approx  
		\left(
		\frac{1}{2}+\frac{1}{4\sqrt{ 1-4\lambda_\xi^{2\alpha}}} \right)e^{\frac{1}{2}\sqrt{ 1-4\lambda_\xi^{2\alpha}}~~t }-\frac{\lambda_\xi^{2\alpha}}{\sqrt{ 1-4\lambda_\xi^{2\alpha}}}  e^{-\frac{1}{2}\sqrt{ 1-4\lambda_\xi^{2\alpha}}~~t }.
	\end{align*}
	Thus, from \eqref{number1} we deduce that
	\begin{align*} 
		\widehat{u}(t,\xi)_{kl} 
		&\approx e^{-\frac{t}{2}}\left[
		\left(
		\frac{1}{2}+\frac{1}{4\sqrt{ 1-4\lambda_\xi^{2\alpha}}} \right)e^{\frac{1}{2}\sqrt{ 1-4\lambda_\xi^{2\alpha}}~~t }-\frac{\lambda_\xi^{2\alpha}}{\sqrt{ 1-4\lambda_\xi^{2\alpha}}}  e^{-\frac{1}{2}\sqrt{ 1-4\lambda_\xi^{2\alpha}}~~t }\right] \widehat{u}_0(\xi)_{kl}\\&\quad+e^{-\frac{t}{2}} \left[\frac{e^{\frac{1}{2}\sqrt{ 1-4\lambda_\xi^{2\alpha}}~~t }-e^{-\frac{1}{2}\sqrt{ 1-4\lambda_\xi^{2\alpha}}~~t }}{2\sqrt{ 1-4\lambda_\xi^{2\alpha}}}\right] \widehat{u}_1(\xi)_{kl},
	\end{align*}
	and therefore, 
	\begin{align*} 
		|	\widehat{u}(t,\xi)_{kl} |
		&\lesssim e^{-\frac{t}{2} } \Bigg[  e^{\frac{1}{2}\sqrt{ 1-4\lambda_\xi^{2\alpha}}~~t } \left(\left|\widehat{u}_0(\xi)_{k \ell}\right|+\left|\widehat{u}_1(\xi)_{k \ell}\right|\right) \\&\quad\qquad\qquad\qquad
		+\frac{e^{-\frac{1}{2}\sqrt{ 1-4\lambda_\xi^{2\alpha}}~~t }}{\sqrt{ 1-4\lambda_\xi^{2\alpha}} } \left(\lambda_\xi^{2\alpha} \left|\widehat{u}_0(\xi)_{k \ell}\right|+\frac{1}{2}\left|\widehat{u}_1(\xi)_{k \ell}\right|\right)\Bigg]\\
		&\lesssim e^{-\frac{t}{2} }  \left(\left|\widehat{u}_0(\xi)_{k \ell}\right|+\left|\widehat{u}_1(\xi)_{k \ell}\right|\right) \left[  e^{\frac{1}{2}\sqrt{ 1-4\lambda_\xi^{2\alpha}}~~t }  
		+\frac{e^{-\frac{1}{2}\sqrt{ 1-4\lambda_\xi^{2\alpha}}~~t }}{\sqrt{ 1-4\lambda_\xi^{2\alpha}} }  \right] \\
		&\lesssim e^{-\frac{t}{2}+\frac{1}{2}\sqrt{ 1-4\lambda_\xi^{2\alpha}}~~t }  \left(\left|\widehat{u}_0(\xi)_{k \ell}\right|+\left|\widehat{u}_1(\xi)_{k \ell}\right|\right) \left[ 1
		+\frac{e^{-\sqrt{ 1-4\lambda_\xi^{2\alpha}}~~t }}{\sqrt{ 1-4\lambda_\xi^{2\alpha}} }  \right] \\
		&\approx  e^{-\frac{t}{2}+\frac{1}{2}( 1-2\lambda_\xi^{2\alpha})t}   \left(\left|\widehat{u}_0(\xi)_{k \ell}\right|+\left|\widehat{u}_1(\xi)_{k \ell}\right|\right) \left[ 1
		+\frac{e^{-\sqrt{ 1-4\lambda_\xi^{2\alpha}}~~t }}{\sqrt{ 1-4\lambda_\xi^{2\alpha}} }  \right] \\
		&\lesssim  e^{ -\lambda_\xi^{2\alpha}t}   \left(\left|\widehat{u}_0(\xi)_{k \ell}\right|+\left|\widehat{u}_1(\xi)_{k \ell}\right|\right)  .
	\end{align*}
	This implies using AM-GM inequality  that
	\begin{align}\label{number23}
		\lambda_{\xi}^{2\alpha} \left|\widehat{u}(t, \xi)_{k \ell}\right|^2  \lesssim \lambda_{\xi}^{2\alpha}  {e}^{- 2\lambda_{\xi}^{2\alpha} t}\left(\left|\widehat{u}_0(\xi)_{k \ell}\right|^2 +\left|\widehat{u}_1(\xi)_{k \ell}\right|^2\right) \lesssim(1+t)^{-1}\left(\left|\widehat{u}_0(\xi)_{k \ell}\right|^2+\left|\widehat{u}_1(\xi)_{k \ell}\right|^2\right).
	\end{align}
	We note  that, for $[\xi] \in \mathcal{R}_1$,  we have
	\begin{align*}
		A_0(t, \xi)-\frac{1}{2}A_1(t, \xi)&=\frac{e^{\frac{1}{2}\sqrt{ 1-4\lambda_\xi^{2\alpha}}~~t }+e^{-\frac{1}{2}\sqrt{ 1-4\lambda_\xi^{2\alpha}}~~t }}{2}- \frac{e^{\frac{1}{2}\sqrt{ 1-4\lambda_\xi^{2\alpha}}~~t }-e^{-\frac{1}{2}\sqrt{ 1-4\lambda_\xi^{2\alpha}}~~t }}{2\sqrt{ 1-4\lambda_\xi^{2\alpha}}}\\&=\left(
		\frac{1}{2}-\frac{1}{2\sqrt{ 1-4\lambda_\xi^{2\alpha}}} \right)e^{\frac{1}{2}\sqrt{ 1-4\lambda_\xi^{2\alpha}}~~t }+\left(
		\frac{1}{2}+\frac{1}{2\sqrt{ 1-4\lambda_\xi^{2\alpha}}} \right)e^{-\frac{1}{2}\sqrt{ 1-4\lambda_\xi^{2\alpha}}~~t }\\
		&\approx  
		-\frac{\lambda_\xi^{2\alpha}}{\sqrt{ 1-4\lambda_\xi^{2\alpha}}}  e^{\frac{1}{2}\sqrt{ 1-4\lambda_\xi^{2\alpha}}~~t }+\left(
		\frac{1}{2}+\frac{1}{2\sqrt{ 1-4\lambda_\xi^{2\alpha}}} \right)e^{-\frac{1}{2}\sqrt{ 1-4\lambda_\xi^{2\alpha}}~~t }.
	\end{align*}
	Therefore, using it in \eqref{number22} for $[\xi] \in \mathcal{R}_1$, we get
	\begin{align}\label{number27}\nonumber
		\left|\partial_t \widehat{u}(t, \xi)_{k \ell}\right| & \lesssim \lambda_{\xi}^{2\alpha} {e}^{-\lambda_{\xi}^{2\alpha} t}\left(\left|\widehat{u}_0(\xi)_{k \ell}\right|+\left|\widehat{u}_1(\xi)_{k \ell}\right|\right)+ {e}^{-t}\left(\left|\widehat{u}_0(\xi)_{k \ell}\right|+\left|\widehat{u}_1(\xi)_{k \ell}\right|\right) \\
		& \lesssim(1+t)^{-1}\left(\left|\widehat{u}_0(\xi)_{k \ell}\right|+\left|\widehat{u}_1(\xi)_{k \ell}\right|\right).
	\end{align}
	
	\noindent\textbf{Estimate on $\mathcal{R}_2:$} When $ \frac{1}{16}\leq \lambda_\xi^{2\alpha }<\frac{1}{4}$,  by following the similar calculation, there exists a suitable  positive constant $c_1$     independent of $[\xi]$ such that 
	\begin{align}\label{eqc1}
		| \widehat{u}(t,\xi)_{kl}| \lesssim e^{-c_1t}\left[| \widehat{u}_0(\xi)_{kl}| +| \widehat{u}_1(\xi)_{kl}| \right].
	\end{align}
	When $   \lambda_\xi^{2\alpha }\geq \frac{1}{4}$,  it is easy to note that
	$
	| A_0(t,\xi)| \leq1 $ and $| {A}_1(t,\xi)| \leq t.
	$
	Therefore from \eqref{number2}, there exists a suitable  $c_2>0$     independent of $[\xi]$ such that 
	\begin{align}\label{eqc2}\nonumber
		| \widehat{u}(t,\xi)_{kl}| &\leq  e^{-\frac{t}{2}}  \widehat{u}_0(\xi)_{kl}+t e^{-\frac{t}{2}}  \left(\widehat{u}_1(\xi)_{kl}+\frac{1}{2}\widehat{u}_0(\xi)_{kl}\right)\\\nonumber
		&		\lesssim (1+t) e^{-\frac{t}{2}}   \left[| \widehat{u}_0(\xi)_{kl}| +| \widehat{u}_1(\xi)_{kl}| \right]\\
		&	\lesssim  e^{-c_2 t}   \left[| \widehat{u}_0(\xi)_{kl}| +| \widehat{u}_1(\xi)_{kl}| \right].
	\end{align}
	Thus from  (\ref{eqc1})  and \eqref{eqc2},  we have
	\begin{align}\label{eqd2}
		| \widehat{u}(t,\xi)_{kl}|  	\lesssim  e^{-c t}   \left[| \widehat{u}_0(\xi)_{kl}| +| \widehat{u}_1(\xi)_{kl}| \right].
	\end{align}		
	where $c$ is a suitable  positive constant    independent of $[\xi]$.
	
	Moreover,  for $[\xi] \in \mathcal{R}_2$, it follows that 
	\begin{align}\label{number24}
		\lambda_{\xi}^\alpha\left|\widehat{u}(t, \xi)_{k \ell}\right| \lesssim  {e}^{-c t}\left(\lambda_{\xi}^\alpha\left|\widehat{u}_0(\xi)_{k \ell}\right|+\left|\widehat{u}_1(\xi)_{k \ell}\right|\right),
	\end{align}
	for a suitable positive constant $c$.  
	
	On the other hand, for $[\xi] \in \mathcal{R}_2$,  we get the estimate
	\begin{align}\label{number26}
		\left|\partial_t \widehat{u}(t, \xi)_{k \ell}\right| \lesssim  {e}^{-c t}\left(\lambda_{\xi}^\alpha \left|\widehat{u}_0(\xi)_{k \ell}\right|+\left|\widehat{u}_1(\xi)_{k \ell}\right|\right),
	\end{align}
	where $c>0$ is a suitable constant.

	\textbf{Estimate for $\|u(t, \cdot )\|_{L^{2}(G)}$:}
	Using the Plancherel formula along with  the equations (\ref{eqd1}) and (\ref{eqd2}), it follows that 
	\begin{align}\label{L2}
		\|u(t, \cdot)\|_{L^{2}(G)}^{2}&=\sum_{[\xi] \in \widehat{G}} d_{\xi} \sum_{k, \ell=1}^{d_{\xi}}\left|\widehat{u}(t, \xi)_{k \ell}\right|^{2} \nonumber \\
		&=\sum_{[\xi] \in \mathcal{R}_1} d_{\xi} \sum_{k, \ell=1}^{d_{\xi}}\left|\widehat{u}(t, \xi)_{k \ell}\right|^{2}  +\sum_{[\xi] \in \mathcal{R}_2} d_{\xi} \sum_{k, \ell=1}^{d_{\xi}}\left|\widehat{u}(t, \xi)_{k \ell}\right|^{2} \nonumber\\
		&	\lesssim \sum_{[\xi] \in \mathcal{R}_1} d_{\xi} \sum_{k, \ell=1}^{d_{\xi}} \left(\left|\widehat{u}_{0}(\xi)_{k \ell}\right|^{2}+\left|\widehat{u}_{1}(\xi)_{k \ell}\right|^{2}\right) \nonumber \\&\qquad +\sum_{[\xi] \in \mathcal{R}_2} d_{\xi} \sum_{k, \ell=1}^{d_{\xi}} e^{-2ct}\left(\left|\widehat{u}_{0}(\xi)_{k \ell}\right|^{2}+\left|\widehat{u}_{1}(\xi)_{k \ell}\right|^{2}\right)\nonumber\\
		&	\lesssim \sum_{[\xi] \in \widehat{G}} d_{\xi} \sum_{k, \ell=1}^{d_{\xi}}\left(\left|\widehat{u}_{0}(\xi)_{k \ell}\right|^{2}+\left|\widehat{u}_{1}(\xi)_{k \ell}\right|^{2}\right) \nonumber \\
		&=\left\|u_{0}\right\|_{L^{2}(G)}^{2}+\left\|u_{1}\right\|_{L^{2}(G)}^{2} .
	\end{align}
	
	\textbf{Estimate for $\left\|(-\mathcal{L})^{\alpha / 2} u(t, \cdot )\right\|_{L^{2}(G)}$:}
	Using the Plancherel formula, we  get
	\begin{align}\label{f1}
		\left\|(-\mathcal{L})^{\alpha / 2} u(t, \cdot)\right\|_{L^{2}(G)}^2 \nonumber &=\sum_{[\xi] \in \widehat{G}} d_{\xi}  \left\|\sigma_{(-\mathcal{L})^{\alpha / 2}}(\xi)\widehat{u}(t, \xi) \right\|_{HS}^{2} \\\nonumber=&\sum_{[\xi] \in \widehat{G}} d_{\xi} \sum_{k, \ell=1}^{d_{\xi}}\lambda_\xi^{2\alpha }\left|\widehat{u}(t, \xi)_{k \ell}\right|^{2}\\\nonumber
		=&\sum_{[\xi] \in \mathcal{R}_1} d_{\xi} \sum_{k, \ell=1}^{d_{\xi}}\lambda_\xi^{2\alpha } \left|\widehat{u}(t, \xi)_{k \ell}\right|^{2}  +\sum_{[\xi] \in \mathcal{R}_2} d_{\xi} \sum_{k, \ell=1}^{d_{\xi}}\lambda_\xi^{2\alpha }\left|\widehat{u}(t, \xi)_{k \ell}\right|^{2}  \\		\nonumber
		\lesssim&(1+t)^{-1}\sum_{[\xi] \in \mathcal{R}_1} d_{\xi} \sum_{k, \ell=1}^{d_{\xi}}   \left(\left|\widehat{u}_0(\xi)_{k \ell}\right|^2+\left|\widehat{u}_1(\xi)_{k \ell}\right|^2\right)\\ \nonumber\quad &+{e}^{-c t}\sum_{[\xi] \in \mathcal{R}_2} d_{\xi} \sum_{k, \ell=1}^{d_{\xi}}  \left(\lambda_{\xi}^{2\alpha}\left|\widehat{u}_0(\xi)_{k \ell}\right|^2+\left|\widehat{u}_1(\xi)_{k \ell}\right|^2\right)
		\\			\lesssim &(1+t)^{-1} (\left\|u_{0}\right\|_{H_{\mathcal{L}}^{{\alpha }}(G)}^{2}+\left\|u_{1}\right\|_{L^{2}(G)}^{2}). \end{align}

	%
	%
	%
	
	\textbf{Estimate for $\left\|\partial_{t} u(t, \cdot )\right\|_{L^{2}(G)}$:} From (\ref{number27}) and  (\ref{number26}),     
	the Plancherel formula yields that 
	\begin{align}\label{deri}\nonumber
		\left\|\partial_{t} u(t, \cdot )\right\|_{L^{2}(G)}^2 &=\sum_{[\xi] \in \widehat{G}} d_{\xi} \sum_{k, \ell=1}^{d_{\xi}}  \left|\partial_{t} \widehat{u}(t, \xi)_{k \ell}\right|^{2}\\\nonumber	=&\sum_{[\xi] \in \mathcal{R}_1} d_{\xi} \sum_{k, \ell=1}^{d_{\xi}} \left|\partial_{t} \widehat{u}(t, \xi)_{k \ell}\right|^{2} +\sum_{[\xi] \in \mathcal{R}_2} d_{\xi} \sum_{k, \ell=1}^{d_{\xi}} \left|\partial_{t} \widehat{u}(t, \xi)_{k \ell}\right|^{2} \\		\nonumber	\lesssim&(1+t)^{-2}\sum_{[\xi] \in \mathcal{R}_1} d_{\xi} \sum_{k, \ell=1}^{d_{\xi}}   \left(\left|\widehat{u}_0(\xi)_{k \ell}\right|^2+\left|\widehat{u}_1(\xi)_{k \ell}\right|^2\right)\\ \nonumber\quad &+{e}^{-2c t}\sum_{[\xi] \in \mathcal{R}_2} d_{\xi} \sum_{k, \ell=1}^{d_{\xi}}  \left(\lambda_{\xi}^{2\alpha}\left|\widehat{u}_0(\xi)_{k \ell}\right|^2+\left|\widehat{u}_1(\xi)_{k \ell}\right|^2\right)
		\\			\lesssim &(1+t)^{-2} (\left\|u_{0}\right\|_{H_{\mathcal{L}}^{{\alpha }}(G)}^{2}+\left\|u_{1}\right\|_{L^{2}(G)}^{2}). \end{align}

	Now, we are in a position to prove  Proposition  \ref{thm11}.

	\begin{proof}[Proof of Proposition \ref{thm11}]
		The proof of Theorem \ref{thm11} follows from  the estimates  (\ref{L2}), (\ref{f1}), and  (\ref{deri})  for $\|u(t, \cdot )\|_{L^{2}(G)}$, $\left\|(-\mathcal{L})^{\alpha / 2} u(t, \cdot )\right\|_{L^{2}(G)}$, and 	$\left\|\partial_{t} u(t, \cdot )\right\|_{L^{2}(G)}$, respectively.
	\end{proof}

	\subsection{Local in time existence}\label{sec4}
	In this subsection we will prove  Theorem \ref{thm22}, i.e.,   the local well-posedness of the   Cauchy problem   (\ref{eq0010})  in the energy evolution space  $\mathcal C\left([0,T],  H^\alpha_{\mathcal{L}}(G)\right)\cap\mathcal C^1\left([0,T],L^2(G)\right)$.  

	%
	First, we recall some notations to present the proof of Theorem \ref{thm22}. Consider the space \[X(T):=\mathcal{C}\left([0,T],  H^\alpha_{\mathcal L}(G)\right)\cap\mathcal C^1\left([0,T],L^2(G)\right),\] equipped with the norm
	\begin{align}\label{eq33333} 
		\|u\|_{X(T)}&:=\sup\limits_{t\in[0,T]}\left (  \|u(t,\cdot)\|_{L^2(G)}+\|(-\mathcal L)^{\alpha/2}u(t,\cdot)\|_{L^2(G)}+\|\partial_tu(t,\cdot)\|_{L^2(G)}\right ).
	\end{align}

	Here we will briefly recall the notion of mild solutions in our framework to the Cauchy problem (\ref{eq0010})  and will analyze our approach to prove Theorem \ref{thm22}. Applying Duhamel's principle, the solution to the nonlinear inhomogeneous problem
	\begin{align}\label{eq3111}
		\begin{cases}
			\partial^2_tu+(-\mathcal{L})^\alpha u+\partial_{t}u =F(t, x), & x\in G,t>0,\\
			u(0,x)=  u_0(x),  & x\in G,\\ \partial_tu(0, x)=  u_1(x), & x\in G,
		\end{cases}
	\end{align}
	can be expressed as
	$$ u(t, x)= u_{0}(x)*_{(x)}E_{0}(t, x)+u_{1}(x)*_{(x)}E_{1}(t, x) +\int_{0}^{t} F(s, x)*_{(x)} E_{1}(t-s, x) \;d s,  $$
	where $*_{(x)}$ is  the group convolution product on $G$ with respect to the $x$ variable. Here $E_{0}(t, x)$ and $E_{1}(t, x)$  represent   the fundamental solutions to the homogeneous problem, i.e.,  \eqref{eq3111} with $F=0$ and the initial data $\left(u_{0}, u_{1}\right)=\left(\delta_{0}, 0\right)$ and $\left(u_{0}, u_{1}\right)=$ $\left(0, \delta_{0}\right)$, respectively. 
	For any left-invariant differential operator $L$ on the compact Lie group $ {G}$, we apply  the property  that it commute with the group convolution, that is,   $L\left(v*_{(x)} E_{1}(t, \cdot)\right)=v *_{(x)} L\left(E_{1}(t, \cdot)\right)$ and   the invariance by time translations for the wave operator $	\partial^2_t+(-\mathcal{L})^\alpha $,
	to get the previous representation formula.

	We say that a function  $u$ is  a {\it mild solution} to (\ref{eq3111})  on $[0, T]$ if $u$ is a fixed point for  the       integral operator,  $N: u \in X(T) \rightarrow N u(t, x) ,$ given  by 
	\begin{align}\label{f2} 
		N u(t, x):= \varepsilon u_{0}(x) *_{(x)}  E_{0}(t, x)+\varepsilon u_{1}(x) *_{(x)}  E_{1}(t, x) +\int_{0}^{t}|u(s, x)|^{p} *_{(x)}  E_{1}(t-s, x) \;ds
	\end{align}
	in the energy evolution space $X(T) \doteq \mathcal{C}\left([0, T], H_{\mathcal{L}}^{\alpha}(G)\right) \cap \mathcal{C}^{1}\left([0, T], L^{2}(G)\right)$, equipped with the norm defined in  \eqref{eq33333}. 
	
	In order to show a uniquely determined fixed point of $N$  for a sufficiently small $T=T(\varepsilon)$, we use the Banach fixed point theorem with respect to the norm on $X(T)$ as defined by \eqref{eq33333}.  In fact, for the small enough initial data $\left\|\left(u_{0}, u_{1}\right)\right\|_{H_{\mathcal{L}}^{\alpha}(G) \times L^{2}(G)}$,    we   will establish the following two  inequalities
	
	\begin{align}\label{2number100}
		\|N u\|_{X(T)} \leq C\left\|\left(u_{0}, u_{1}\right)\right\|_{H_{\mathcal{L}}^{\alpha}(G) \times L^{2}(G)}+C\|u\|_{X(T)}^{p},
	\end{align} 
	and \begin{align}\label{2number101}
		\|N u-N v\|_{X(T)} \leq C\|u-v\|_{X(T)}\left(\|u\|_{X(T)}^{p-1}+\|v\|_{X(T)}^{p-1}\right),
	\end{align}
	for any $u, v \in X(T)$ and for some  suitable constant $C>0$ independent of $T$. Then the  Banach fixed point theorem immediately gives  a uniquely determined fixed point $u$ on $N$. This fixed point $u$ will be  our mild solution to (\ref{eq3111})  on $[0, T]$.

	In order to prove the local existence result, an essential tool is the following Gagliardo-Nirenberg type inequality proved in general Lie groups   \cite{Gall}. 
	\begin{lem}\cite{Gall} \label{lemma1}
		Let $G$ be a connected unimodular Lie group with topological dimension $n.$ For any $1<q_0<\infty,~0<q,q_1<\infty$ and $0<\alpha<n$ such that $q_0<\frac{n}{\alpha},$ the following Gagliardo-Nirenberg type inequality holds
		\begin{align}\label{eq33}
			\|f\|_{L^q(G)}\lesssim \|f\|^\theta_{H^{\alpha,q_0}_\mathcal L(G)}\|f\|^{1-\theta}_{L^{q_1}(G)},
		\end{align} 
		for all $f\in H^{\alpha,q_0}_\mathcal L(G)\cap L^{q_1}(G),$ provided that
		\begin{align*}
			\theta=\theta(n,\alpha,q,q_0,q_1)=\frac{\frac{1}{q_1}-\frac{1}{q}}{\frac{1}{q_1}-\frac{1}{q_0}+\frac{\alpha}{n}}\in[0,1].
		\end{align*}
	\end{lem}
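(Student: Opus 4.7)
The plan is to decompose the Gagliardo-Nirenberg inequality into a Sobolev embedding followed by a Hölder interpolation. First I would establish the scale-critical Sobolev embedding $H^{\alpha,q_0}_{\mathcal{L}}(G) \hookrightarrow L^{q^*}(G)$ where $\frac{1}{q^*} = \frac{1}{q_0} - \frac{\alpha}{n}$; the assumption $q_0 < \frac{n}{\alpha}$ is precisely what ensures $q^* \in (q_0, \infty)$. Second, I would interpolate between $L^{q^*}(G)$ and $L^{q_1}(G)$ via log-convexity of the $L^p$ norms: if $\frac{1}{q} = \frac{\eta}{q^*} + \frac{1-\eta}{q_1}$ with $\eta \in [0,1]$, then
$$\|f\|_{L^q(G)} \leq \|f\|_{L^{q^*}(G)}^{\eta}\|f\|_{L^{q_1}(G)}^{1-\eta}.$$
Solving the linear constraint for $\eta$ after substituting the definition of $q^*$ yields $\eta = \frac{1/q_1 - 1/q}{1/q_1 - 1/q_0 + \alpha/n}$, which is exactly the $\theta$ in the statement; composing with the embedding closes the argument.

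The substantive step is the Sobolev embedding itself. On a connected unimodular Lie group I would obtain it from the subordination representation of the Bessel potential,
$$(I-\mathcal{L})^{-\alpha/2} = \frac{1}{\Gamma(\alpha/2)}\int_0^{\infty} t^{\alpha/2-1} e^{-t} e^{t\mathcal{L}} \, dt,$$
together with ultracontractivity estimates for the heat semigroup $e^{t\mathcal{L}}$. On compact Lie groups the associated heat kernel admits the explicit spectral expansion $p_t(x) = \sum_{[\xi]\in\widehat{G}} d_\xi e^{-\lambda_\xi^2 t} \operatorname{Tr}(\xi(x))$, and the bound $\|e^{t\mathcal{L}}\|_{L^1\to L^\infty} \lesssim t^{-n/2}$ for small $t$ follows from Weyl-type asymptotics for the eigenvalue counting function together with the Plancherel identity \eqref{eq002}. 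In the general unimodular polynomial-growth setting one instead invokes the Varopoulos-Saloff-Coste-Coulhon heat kernel bound. Inserting either estimate into the subordination formula and optimizing over $t$ yields the Hardy-Littlewood-Sobolev boundedness of $(I-\mathcal{L})^{-\alpha/2}$ from $L^{q_0}(G)$ into $L^{q^*}(G)$, from which the embedding is immediate.

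The main obstacle is establishing the ultracontractivity bound with the correct power $t^{-n/2}$ of the topological dimension $n$; in the compact case this is essentially spectral-theoretic and reduces to manipulations on $\widehat{G}$, but in the general unimodular case it requires the deeper geometric analysis of Varopoulos-Saloff-Coste-Coulhon. Once the semigroup bound is in hand, the remaining interpolation step is purely real-analytic and carries over to any $\sigma$-finite measure space without modification. Since the lemma is quoted directly from \cite{Gall}, where all these ingredients are assembled in a unified fashion, the cleanest route in this paper is simply to invoke the result from that reference.
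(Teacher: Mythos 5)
The paper does not prove this lemma at all: it is imported verbatim from \cite{Gall}, so there is no internal proof to compare against, and your closing observation that the cleanest route is simply to invoke that reference is exactly what the authors do. Your sketch is nevertheless a correct reconstruction of the standard argument (Sobolev embedding $H^{\alpha,q_0}_{\mathcal L}(G)\hookrightarrow L^{q^*}(G)$ with $\tfrac{1}{q^*}=\tfrac{1}{q_0}-\tfrac{\alpha}{n}$, followed by log-convexity of the $L^p$ norms, with $\eta=\theta$ coming out of the linear constraint), and it is in the same spirit as the proof in \cite{Gall}. Two steps are compressed enough to deserve a flag if you were to write this out: (i) passing from the ultracontractivity bound $\|e^{t\mathcal L}\|_{L^1\to L^\infty}\lesssim t^{-n/2}$ to the $L^{q_0}\to L^{q^*}$ boundedness of the Bessel potential is not a direct optimization over $t$ for a fixed $f$; one needs the usual splitting of the subordination integral at a level depending on a distribution-function decomposition, yielding a weak-type bound that is then upgraded by Marcinkiewicz interpolation; and (ii) the norm in the statement is $\|f\|_{L^{q_0}}+\|(-\mathcal L)^{\alpha/2}f\|_{L^{q_0}}$, not the Bessel-potential norm $\|(I-\mathcal L)^{\alpha/2}f\|_{L^{q_0}}$, so the equivalence of the two (an $L^{q_0}$ spectral multiplier fact) must be supplied before the embedding applies. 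Also, the lemma is stated for general connected unimodular Lie groups, not only those of polynomial growth; the reason only the topological dimension $n$ enters is that the $e^{-t}$ factor in the subordination formula makes the small-time regime, governed by $n$, the only one that matters. None of these affects the validity of the approach.
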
 
	We refer to \cite{Gall, 27}  for several immediate important  remarks from Lemma \ref{lemma1}.
		%
		The next corollary is a  version of Lemma \ref{lemma1}, which is useful in our setting.
		\begin{cor}
			Let $G$ be a connected unimodular Lie group with topological dimension $n\geq 2[\alpha]+2$.  For any $q\ge2$ such that $q\leq\frac{2n}{n-2\alpha}$, the following Gagliardo-Nirenberg type inequality holds
			\begin{align}\label{eq34}
				\|f\|_{L^q(G)}\lesssim \|f\|^{\theta(n, q, \alpha)}_{H^{\alpha }_\mathcal L(G)}\|f\|^{1-\theta(n, q, \alpha)}_{L^{2}(G)},
			\end{align} 
			for all $f\in H^{\alpha}_\mathcal L(G)$, where $\theta(n, q, \alpha)=\frac{n}{\alpha}\left(\frac{1}{2}-\frac{1}{q} \right) $.
		\end{cor}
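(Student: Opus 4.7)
The plan is to derive the corollary as a direct specialization of Lemma \ref{lemma1} by choosing the endpoint exponents $q_0 = q_1 = 2$ and verifying that all hypotheses of that lemma reduce to the stated assumptions on $n$, $\alpha$, and $q$.

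First I would set $q_0 = 2$ and $q_1 = 2$ in \eqref{eq33}. The ambient hypotheses $1 < q_0 < \infty$ and $0 < q_1 < \infty$ are then trivially satisfied. The condition $q_0 < n/\alpha$ becomes $n > 2\alpha$; since $0 < \alpha < 1$ gives $[\alpha] = 0$ and the statement assumes $n \geq 2[\alpha] + 2 \geq 2$, while more generally $2\alpha < 2[\alpha] + 2 \leq n$, so this is automatic from $n \geq 2[\alpha] + 2$. The condition $0 < \alpha < n$ is likewise subsumed.

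Next I would compute the exponent $\theta$ predicted by Lemma \ref{lemma1}. With $q_0 = q_1 = 2$, the denominator in the formula for $\theta$ collapses to
\begin{equation*}
\tfrac{1}{q_1} - \tfrac{1}{q_0} + \tfrac{\alpha}{n} = \tfrac{\alpha}{n},
\end{equation*}
so that
\begin{equation*}
\theta = \frac{\tfrac{1}{2} - \tfrac{1}{q}}{\tfrac{\alpha}{n}} = \frac{n}{\alpha}\left(\tfrac{1}{2} - \tfrac{1}{q}\right),
\end{equation*}
which is exactly $\theta(n,q,\alpha)$ as defined in the statement. It then remains to check the admissibility $\theta \in [0,1]$. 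The lower bound $\theta \geq 0$ is equivalent to $q \geq 2$, and the upper bound $\theta \leq 1$ rearranges to $\tfrac{1}{q} \geq \tfrac{1}{2} - \tfrac{\alpha}{n} = \tfrac{n-2\alpha}{2n}$, that is, $q \leq \tfrac{2n}{n-2\alpha}$. Both coincide with the hypotheses of the corollary.

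Finally, since $q_0 = 2$ the space $H^{\alpha,q_0}_{\mathcal{L}}(G)$ is the Hilbertian Sobolev space $H^{\alpha}_{\mathcal{L}}(G)$ used in the corollary, and the inclusion $H^{\alpha}_{\mathcal{L}}(G) \subset L^2(G) = L^{q_1}(G)$ ensures that the functions $f$ appearing in the target inequality lie in $H^{\alpha,q_0}_{\mathcal{L}}(G) \cap L^{q_1}(G)$, so Lemma \ref{lemma1} applies and yields \eqref{eq34} directly. There is no genuine obstacle here; the only point requiring a moment of care is the reconciliation of the hypothesis $n \geq 2[\alpha] + 2$ with the strict inequality $q_0 < n/\alpha$, which is the reason for writing the dimensional bound in terms of $[\alpha]$ rather than $\alpha$ itself.
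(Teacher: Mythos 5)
Your proposal is correct and follows exactly the route the paper intends: the corollary is stated as a direct specialization of Lemma \ref{lemma1} with $q_0=q_1=2$, and your verification of the hypotheses, the computation of $\theta=\frac{n}{\alpha}\left(\frac{1}{2}-\frac{1}{q}\right)$, and the translation of $\theta\in[0,1]$ into $2\le q\le\frac{2n}{n-2\alpha}$ are all accurate. The paper omits these details, so your write-up simply makes explicit what the authors leave implicit.
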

		
		\begin{proof}[Proof of Theorem \ref{thm22}]
			The  expression    (\ref{f2})  can be wriiten as  $N u=u^\sharp+I[u]$, where 
			\begin{align*}
				u^\sharp(t,x)=\varepsilon u_{0}(x) *_{(x)}  E_{0}(t, x)+\varepsilon u_{1}(x) *_{(x)}  E_{1}(t, x),
			\end{align*}
			and 
			\begin{align*}
				I[u](t,x):=\int\limits_0^t |u(s,x)|^p*_x E_1(t-s, x)ds.
			\end{align*} 
			
			Now, for the part $u^\sharp$,   Theorem \ref{thm11},   immediately  implies that
			\begin{align}\label{f3}
				\|u^\sharp\|_{X(T)}\lesssim\varepsilon\|(u_0,u_1)\|_{{H}_{\mathcal L}^\alpha (G)\times L^2(G)}.
			\end{align}
			On the other hand, for the part $I[u]$, using Minkowski's integral inequality, Young's convolution inequality, Theorem \ref{thm11}, and  by time translation invariance property of the Cauchy problem (\ref{eq0010}), we get  
			\begin{align}\label{f}\nonumber
				\|\partial_t^j(-\mathcal L)^{i\alpha/2}I[u]\|_{L^2(G)}&=\left(\int_{G}	\big |\partial_t^j(-\mathcal L)^{i\alpha/2} \int\limits_0^t |u(s,x)|^p*_x E_1(t-s, x)ds\big |^2 dg\right)^{\frac{1}{2}}\\\nonumber
				&=\left(\int_{G}\big	|  \int\limits_0^t |u(s,x)|^p*_x \partial_t^j(-\mathcal L)^{i\alpha/2}E_1(t-s, x)ds\big|^2 dg\right)^{\frac{1}{2}}	\\\nonumber
				&\lesssim  \int\limits_0^t \| |u(s,\cdot )|^p*_x \partial_t^j(-\mathcal L)^{i\alpha/2}E_1(t-s, \cdot)\|_{L^2(G)}ds\\\nonumber
				&\lesssim  \int\limits_0^t \| u(s,\cdot)^p\|_{L^2(G)} \|\partial_t^j(-\mathcal L)^{i\alpha/2}E_1(t-s, \cdot)\|_{L^2(G)}ds\\\nonumber
				&\lesssim \int\limits_0^t (1+t-s)^{-j-\frac{i}{2}}\|u(s,\cdot)\|^p_{L^{2p}(G)}ds\\\nonumber
				&\lesssim\int\limits_0^t (1+t-s)^{-j-\frac{i}{2}} \|u(s,\cdot)\|^{p\theta(n,2p, \alpha)}_{H^\alpha_\mathcal L(G)}\|u(s,\cdot)\|^{p(1-\theta(n,2p,\alpha ))}_{L^2(G)}ds \\ 
				&\lesssim\int\limits_0^t (1+t-s)^{-j-\frac{i}{2}}  \|u\|^p_{X(s)}ds \lesssim t    \|u\|^p_{X(t)},
			\end{align} 
			for $i,j\in\{0,1\}$, such that $0\leq i+j\leq 1.$   Again for $i,j\in\{0,1\},$ such that $0\leq i+j\leq 1,$ a similar calculations as in  (\ref{f}) together with H\"older's inequality  and    (\ref{eq34}),  we get 
			\begin{align}\label{f5}\nonumber
				& 	\|\partial_t^j(-\mathcal L)^{i\alpha/2}\left(I[u]-I[v]\right)\|_{L^2(G)}\\\nonumber&\lesssim \int\limits_0^t (1+t-s)^{-j-\frac{i}{2}}\||u(s,\cdot)|^p-|v(s,\cdot)|^p\|_{L^{2}(G)}ds\\\nonumber
				&\lesssim\int\limits_0^t (1+t-s)^{-j-\frac{i}{2}} \|u(s,\cdot)-v(s,\cdot)\|_{L^{2p}(G)}\left(\|u(s,\cdot)\|^{p-1}_{L^{2p}(G)}+\|v(s,\cdot)\|^{p-1}_{L^{2p}(G)}\right)ds\\ &\lesssim t  \|u-v\|_{X(t)}\left(\|u\|^{p-1}_{X(t)}-\|v\|^{p-1}_{X(t)}\right).
			\end{align} 
			Thus 	combining  (\ref{f3}),   (\ref{f}), and (\ref{f5}), we have
			\begin{align}\label{1}
				\|N u\|_{X(T)} \leq D \varepsilon\left\|\left(u_{0}, u_{1}\right)\right\|_{H_{\mathcal{L}}^{\alpha }(G) \times L^{2}(G)}+DT\|u\|_{X(t)}^{p} 
			\end{align} 
			and 
			\begin{align}\label{2} \|Nu-Nv\|_{X(T)}\leq   DT \|u-v\|_{X(t)}\left(\|u\|^{p-1}_{X(T)}-\|v\|^{p-1}_{X(T)}\right),\end{align} 
			where $D$ is a constant independent of $t$. 
			Choose $T$ (sufficiently small) in such a way that the map $N$ turns out to be a contraction in some neighborhood of $0$ in the Banach space $X(T).$ Therefore, Banach's fixed point theorem gives us the uniquely determined fixed point $u$ for the map $N$, which is our mild solution. 
			This completes the proof.
		\end{proof}
		
		From the above local existence result, we have the following remark.
		\begin{rmk}
			We note that in the statement of Theorem \ref{thm22}, the  restriction on the upper bound for the exponent $p$ which is $p\leq\frac{n}{n-2\alpha }$   is necessary in order to apply Gagliardo-Nirenberg type inequality (\ref{eq34}) in  (\ref{f}). Also, the other restriction $n \geq 2[\alpha]+2$ is made to fulfill the assumptions for the employment of such inequality.
		\end{rmk}		
		
		%
			%
		%
		%
		
		\subsection{Blow-up result}\label{sec5}
		In this subsection,  we prove Theorem \ref{f6}   using a comparison argument for ordinary differential inequality of second order. 
		Now we are ready to prove our main result of this section using an iteration argument.
		\begin{proof}[Proof of Theorem \ref{f6}]
			According to Definition \ref{eq332},  let $u$ be a local in-time energy solution to (\ref{eq0010})  with lifespan $T$. Let    $t \in(0, T)$ be fixed.  Suppose that $\phi \in \mathcal{C}_{0}^{\infty}([0, T) \times G),$  is a cut-off function such that $\phi=1$ on $[0, t] \times G$ in (\ref{eq011}). Then 
			\begin{align}\label{f7}
				\int_{G} \partial_{t} u(t, x) \;dx+	\int_{G}   u(t, x) \;dx-\varepsilon \int_{G} u_{0}(x) \;dx-\varepsilon \int_{G} u_{1}(x) \;dx=\int_{0}^{t} \int_{ {G}}|u(s, x)|^{p}  {~d} x {~d} s
			\end{align}
			Let us introduce the time-dependent functional
			$$
			U_{0}(t) \doteq \int_{G} u(t, x) \;dx. 
			$$
			Then the equality (\ref{f7})    can be rewritten in the following way:
			$$
			U_{0}^{\prime}(t)-U_{0}^{\prime}(0)+	U_{0}(t)-U_{0}(0) =\int_{0}^{t} \int_{G}|u(s, x)|^{p} \;dx \;ds .
			$$
			We also remark that,  from the assumptions on the initial data, we obtain 
			$$
			U_{0}(0)=\varepsilon \int_{G} u_{0}(x) \;dx \geq 0 \quad \text { and } \quad U_{0}^{\prime}(0)=\varepsilon \int_{G} u_{1}(x) \;dx \geq 0.
			$$
			Using Jensen's inequality, we have 
			\begin{align}\label{number30}
				U_{0}^{\prime}(t)-U_{0}^{\prime}(0)+	U_{0}(t)-U_{0}(0)  \geq \int_{0}^{t}  \left|U_{0}(s)\right|^{p} \;ds.
			\end{align}
			Multiplying both sides of \eqref{number30}  by $e^t$ and then  integrating over $[0, t ]$,   we obtain
			$$
			{e}^t U_0(t) \geq  \left(U_0^{\prime}(0)+U_0(0)\right)\left( {e}^t-1\right)+U_0(0)+\int_0^t  {e}^\eta \int_0^\eta\left|U_0(s)\right|^p  {~d} s  {~d} \eta,
			$$
			i.e.,
			$$
			U_0(t) \geq U_0(0)+U_0^{\prime}(0)\left(1- {e}^{-t}\right)+\int_0^t  {e}^{\eta-t} \int_0^\eta\left|U_0(s)\right|^p  {~d} s  {~d} \eta.
			$$
			Since $U_0(0)$ and $U_0'(0)$ are non-negative, the above expression implies that  $U_0$ is a positive function. Moreover, we also can say that
			$$
			U_0(t) \geq U_0(0)+U_0^{\prime}(0)\left(1- {e}^{-t}\right) \geq C \varepsilon \quad \text { for } t \ge0,
			$$
			where the multiplicative constant $C$ depends on $u_0, u_1$ and  we also have the    following  iteration scheme
			$$
			U_0(t) \geq \int_0^t  {e}^{\eta-t} \int_0^\eta\left|U_0(s)\right|^p  {~d} s  {~d} \eta.
			$$
			Now proceeding similarly as in Subsection 3.1 and 3.2 of \cite{27} for the iteration argument,   we conclude the proof of Theorem \ref{f6}.
		\end{proof}

			%
			%
		%
		%
		
		\section{A global existence result}\label{sec6}
		In this section,   we study the global in-time existence of small data solutions for the nonlinear fractional dumped wave equation with mass and the power type nonlinearity. More preciously,  for $0<\alpha<1$, we consider the   Cauchy problem  
		\begin{align} \label{2number31}
			\begin{cases}
				\partial^2_tu+\left( -\mathcal{L}\right)^\alpha u+b\partial_{t} u+m^2u=|u|^p, & x\in  G, t>0,\\
				u(0,x)=u_0(x),  & x\in G,\\ \partial_tu(x,0)=u_1(x), & x\in G,
			\end{cases}
		\end{align}
		where  $ p > 1$, $b,m^2$ are positive constants, $u_{0}(x)$ and $u_{1}(x)$ are two given functions on $G$.

		\subsection{Fourier multiplier expressions and $L^2(G)-L^2(G)$ estimates 
		}\label{2sec3}
		In this subsection, we derive $L^2(G)– L^2(G)$ estimates for  solutions of  the homogeneous problem     (\ref{2number31}).
		We  employ the group Fourier transform on the compact group $G$ with respect to  the space variable $x$  together with the Plancherel identity in order to      estimate     $L^2$-norms of  $u(t, ·), (-\mathcal{L})^{\frac{\alpha}{2}}u(t, \cdot)$, and $\partial_{t}u(t, ·)$. 
		
		Let $u$ be a solution to (\ref{2number31}). Let $\widehat{u}(t, \xi)=(\widehat{u}(t, \xi)_{kl})_{1\leq k, l\leq d_\xi}\in \mathbb{C}^{d_\xi\times d_\xi}, [\xi]\in\widehat{ G}$ denote the Fourier transform of $u$  with respect to the $x $ variable. Applying the group Fourier transform with respect to $x$ on   (\ref{2number31}), we deduce that $\widehat{u}(t, \xi)$ is  a solution to the following  Cauchy problem for the system of ODE's (with the size of the system that depends on the representation $\xi$)
		\begin{align}\label{2eq6661}
			\begin{cases}
				\partial^2_t\widehat{u}(t,\xi)+(-	\sigma_{\mathcal{L}}(\xi))^\alpha \widehat{u}(t,\xi) +b \partial_t \widehat{u}(t,\xi)+m^2  \widehat{u}(t,\xi)=0,& [\xi]\in\widehat{ G},~t>0,\\ \widehat{u}(0,\xi)=\widehat{u}_0(\xi), &[\xi]\in\widehat{ G},\\ \partial_t\widehat{u}(0,\xi)=\widehat{u}_1(\xi), &[\xi]\in\widehat{ G},
			\end{cases} 
		\end{align}
		where  $\sigma_{\mathcal{L}}$	is the symbol of  the  Laplace-Beltrami operator $\mathcal{L}$.  Using the identity (\ref{symbol}),  the   system  (\ref{2eq6661}) can be written in the form of $d_\xi^2$ independent   ODE's, namely,
		\begin{align}\label{2eqq7}
			\begin{cases}
				\partial^2_t\widehat{u}(t,\xi)_{kl}+b	\partial_t\widehat{u}(t,\xi)_{kl}+ \lambda_\xi^{2\alpha }  \widehat{u}(t,\xi)_{kl}+m^2\widehat{u}(t,\xi)_{kl}= 0,& [\xi]\in\widehat{ G},~t>0,\\ \widehat{u}(0,\xi)_{kl}=\widehat{u}_0(\xi)_{kl}, &[\xi]\in\widehat{ G},\\ \partial_t\widehat{u}(0,\xi)_{kl}=\widehat{u}_1(\xi)_{kl}, &[\xi]\in\widehat{ G},
			\end{cases}
		\end{align}
		for all $k,l\in\{1,2,\ldots,d_\xi\}.$ 	Then, the characteristic equation of (\ref{2eqq7}) is given by
		\[\lambda^2+b	\lambda+\lambda_\xi^{2\alpha }+m^2 =0,\]
		and consequently the characteristic roots are   $\lambda=-\frac{b}{2}\pm  {\sqrt{\frac{b^2}{4}-\lambda_\xi^{2\alpha}-m^2}} $. 
		Thus the solution to the   homogeneous  problem   (\ref{2eqq7}) is given by 
		\begin{align}\label{2number2}
			\widehat{u}(t,\xi)_{kl}=e^{-\frac{bt}{2}}A_0(t, \xi) \widehat{u}_0(\xi)_{kl}+e^{-\frac{bt}{2}}A_1(t, \xi) \left(\widehat{u}_1(\xi)_{kl}+\frac{b}{2}\widehat{u}_0(\xi)_{kl}\right),
		\end{align}
		where
		\begin{align}\label{2number1}
			A_0(t, \xi)=\begin{cases}
				\cosh \left(\sqrt{\frac{b^2}{4}-\lambda_\xi^{2\alpha}-m^2}~~t \right),& \text{if } \lambda_\xi^{2\alpha}< \frac{b^2}{4}-m^2,\\
				1,& \text{if } \lambda_\xi^{2\alpha}= \frac{b^2}{4}-m^2,\\
				\cos\left(\sqrt{\lambda_\xi^{2\alpha}-\frac{b^2}{4}+m^2}~~t \right),& \text{if } \lambda_\xi^{2\alpha}> \frac{b^2}{4}-m^2,\\
		\end{cases} 	\end{align}
		and
		\begin{align}\label{2number3}
			A_1(t, \xi)=\begin{cases}
				\frac{	2\sinh \left(\sqrt{\frac{b^2}{4}-\lambda_\xi^{2\alpha}-m^2}~~t \right)}{\sqrt{\frac{b^2}{4}-\lambda_\xi^{2\alpha}-m^2}},& \text{if } \lambda_\xi^{2\alpha}< \frac{b^2}{4}-m^2,\\
				t,& \text{if }  \text{if } \lambda_\xi^{2\alpha}= \frac{b^2}{4}-m^2,\\
				\frac{	\sin\left(\sqrt{\lambda_\xi^{2\alpha}+\frac{b^2}{4}-m^2}~~t \right)}{\sqrt{\lambda_\xi^{2\alpha}-\frac{b^2}{4}+m^2 }},& \text{if } \lambda_\xi^{2\alpha}> \frac{b^2}{4}-m^2.
			\end{cases} 
		\end{align}
		We 	notice that $A_0(t, \xi) = \partial_t A_1(t, \xi)$ for any $[\xi] \in \widehat{ G}$. Moreover, we have the following representation for the time derivative
		\begin{align}\label{2number22}
			\partial_{t}	\widehat{u}(t,\xi)_{kl}=e^{-\frac{bt}{2}}A_0(t, \xi)  \widehat{u}_1(\xi)_{kl}-e^{-\frac{bt}{2}} A_1(t, \xi)  \left[  \frac{b}{2}\widehat{u}_1(\xi)_{kl}+ (\lambda_\xi^{2\alpha}+m^2)\widehat{u}_0(\xi)_{kl} \right]  .
		\end{align}
		
		Next we will estimate the values of $	 |\widehat{u}(t, \xi)_{k \ell}|, \partial_t	 |\widehat{u}(t, \xi)_{k \ell}|$ and $	\lambda_{\xi}| \widehat{u}(t, \xi)_{k \ell}   |$ by     considering    the   relation between $b$ and $m^2$.

		\textbf{When  $b^2<4 m^2$: }
		The only the case is to consider that  $\lambda_{\xi}^2>\frac{b^2}{4}-m^2$ by considering the fact  that  all eigenvalues $\{\lambda_{\xi}^{2\alpha}\}_{[\xi] \in \widehat{ G}}$ of $(-\mathcal{L})^{\alpha}$ are nonnegative. Thus, by the similar calculus done in Subsection \ref{sec3.1},  we have
		\begin{align}\label{2number32}
			\left|\widehat{u}(t, \xi)_{k \ell}\right| & \lesssim  {e}^{-\frac{b}{2} t}\left[ \left|\widehat{u}_0(\xi)_{k \ell}\right|+\left|\widehat{u}_1(\xi)_{k \ell}\right|\right], 	\end{align}
		\begin{align}\label{2number33}
			\lambda_{\xi}^\alpha\left|\widehat{u}(t, \xi)_{k \ell}\right| & \lesssim  {e}^{-\frac{b}{2} t}\left[ \left(1+\lambda_{\xi}^\alpha\right)\left|\widehat{u}_0(\xi)_{k \ell}\right|+\left|\widehat{u}_1(\xi)_{k \ell}\right|\right],
		\end{align}
		and 
		\begin{align}\label{2number34}\nonumber
			\left|\partial_t \widehat{u}(t, \xi)_{k \ell}\right|& \lesssim	e^{-\frac{bt}{2}}  \widehat{u}_1(\xi)_{kl} +   A_0(t, \xi)   (\lambda_\xi^{2\alpha}+m^2)\widehat{u}_0(\xi)_{kl}  \\& \lesssim  {e}^{-\frac{b}{2} t}\left[\left(1+\lambda_{\xi}^\alpha \right)\left|\widehat{u}_0(\xi)_{k \ell}\right|+\left|\widehat{u}_1(\xi)_{k \ell}\right|\right],
		\end{align}
		for any $t \geq0$. Thus,	using the Plancherel formula along with  the equations (\ref{2number32}), (\ref{2number33}) and (\ref{2number34}), it follows that 
		\begin{align}\label{2L2}\nonumber
			\|\partial_t^j(-\mathcal L)^{i\alpha/2}u(t, \cdot)\|_{L^{2}(G)}^{2}&=\sum_{[\xi] \in \widehat{G}} d_{\xi} \sum_{k, \ell=1}^{d_{\xi}}\lambda_{\xi}^{2\alpha  i}\left|\partial_t^j \widehat{u}(t, \xi)_{k \ell}\right|^2    \\\nonumber
			& \lesssim  {e}^{-b t} \sum_{[\xi] \in \widehat{ G}} d_{\xi} \sum_{k, \ell=1}^{d_{\xi}}\left(\left(1+\lambda_{\xi}^{2\alpha}\right)^{(i+j)}\left|\widehat{u}_0(\xi)_{k \ell}\right|^2+\left|\widehat{u}_1(\xi)_{k \ell}\right|^2\right) \\ 
			& =  {e}^{-b t}\left[  \left\|u_{0}\right\|_{H_{\mathcal{L}}^{{\alpha(i+j) }}(G)}^{2}+\left\|u_{1}\right\|_{L^{2}(G)}^{2} \right],
		\end{align}
		for any 	  $i,j\in\{0,1\}$, such that $0\leq i+j\leq 1$,  with the convention that $H_{\mathcal{L}}^{0}(G)=L^2(G).$

		\textbf{When  $b^2=4 m^2$: } In this case,  we only   have to consider  the cases when $\lambda_{\xi}^{2\alpha}=0$ and  $\lambda_{\xi}^{2\alpha}>0$. Then, from \eqref{2number2}, \eqref{2number1}, and \eqref{2number3}, the solution can be written as 
		\begin{align*}
			\widehat{u}(t, \xi)_{k \ell}=
			\begin{cases}
				e^{-\frac{bt}{2}}	\cos\left( {\lambda_\xi^{\alpha}}~~t \right) \widehat{u}_0(\xi)_{kl}+e^{-\frac{bt}{2}}\frac{	\sin\left( {\lambda_\xi^{\alpha} }~~t \right)}{ {\lambda_\xi^{\alpha}  }} \left(\widehat{u}_1(\xi)_{kl}+\frac{b}{2}\widehat{u}_0(\xi)_{kl}\right),&   \text {if} ~ \lambda_{\xi}^2>0, \\ 
				e^{-\frac{bt}{2}}	  \widehat{u}_0(\xi)_{kl}+te^{-\frac{bt}{2}}  \left(\widehat{u}_1(\xi)_{kl}+\frac{b}{2}\widehat{u}_0(\xi)_{kl}\right),&   \text {if} ~ \lambda_{\xi}^2=0.
		\end{cases}  	\end{align*}	
		The second case $\lambda_{\xi}^2=0$ needs to be included as $0$ is the eigenvalue for the trivial representation $G.$   Thus 
		$$
		\begin{aligned}
			\left|\widehat{u}(t, \xi)_{k \ell}\right| & \lesssim (1+t) {e}^{-\frac{b}{2} t}\left(\left|\widehat{u}_0(\xi)_{k \ell}\right|+\left|\widehat{u}_1(\xi)_{k \ell}\right|\right) ,\\
			\lambda_{\xi}^\alpha\left|\widehat{u}(t, \xi)_{k \ell}\right| & \lesssim  {e}^{-\frac{b}{2} t}\left[ \left(1+\lambda_{\xi}^\alpha\right)\left|\widehat{u}_0(\xi)_{k \ell}\right|+\left|\widehat{u}_1(\xi)_{k \ell}\right|\right],
		\end{aligned}
		$$
		and 
		\begin{align*} 
			\left|\partial_t \widehat{u}(t, \xi)_{k \ell}\right| \lesssim  (1+t) {e}^{-\frac{b}{2} t}\left[\left(1+\lambda_{\xi}^\alpha \right)\left|\widehat{u}_0(\xi)_{k \ell}\right|+\left|\widehat{u}_1(\xi)_{k \ell}\right|\right],
		\end{align*}
		for any $t \geq0$. Thus	using the Plancherel formula along with  the above estimates, we get 
		\begin{align}\label{2L22}\nonumber
			\|\partial_t^j(-\mathcal L)^{i\alpha/2}u(t, \cdot)\|_{L^{2}(G)}^{2}&=\sum_{[\xi] \in \widehat{G}} d_{\xi} \sum_{k, \ell=1}^{d_{\xi}}\lambda_{\xi}^{2\alpha  i}\left|\partial_t^j \widehat{u}(t, \xi)_{k \ell}\right|^2    \\\nonumber
			& \lesssim (1+t)^2 {e}^{-b t} \sum_{[\xi] \in \widehat{ G}} d_{\xi} \sum_{k, \ell=1}^{d_{\xi}}\left(\left(1+\lambda_{\xi}^{2\alpha}\right)^{(i+j)}\left|\widehat{u}_0(\xi)_{k \ell}\right|^2+\left|\widehat{u}_1(\xi)_{k \ell}\right|^2\right) \\ 
			& =  (1+t)^2 {e}^{-b t}\left[  \left\|u_{0}\right\|_{H_{\mathcal{L}}^{{\alpha(i+j) }}(G)}^{2}+\left\|u_{1}\right\|_{L^{2}(G)}^{2} \right],
		\end{align}
		for any 	  $i,j\in\{0,1\}$, such that $0\leq i+j\leq 1$.

		
		\textbf{When  $b^2>4 m^2$: }	In this case, depending on the range of  $\lambda_{\xi}^2$,  the characteristic roots may be complex conjugate or real distinct, or they may coincide.   But comparing all possible cases in   \eqref{2number1} and \eqref{2number3} and keeping in mind that   the regularity is provided from the case with complex conjugate characteristic roots, whereas the decay rate is given by the continuous irreducible unitary representations with $\lambda_{\xi}^2=0$,  we obtain 
		$$
		\begin{aligned}
			\left|\widehat{u}(t, \xi)_{k \ell}\right| & \lesssim  {e}^{\left(-\frac{b}{2}+\sqrt{ \frac{b^2}{4}-m^2}\right) t}\left(\left|\widehat{u}_0(\xi)_{k \ell}\right|+\left|\widehat{u}_1(\xi)_{k \ell}\right|\right) ,\\
			\lambda_{\xi}^\alpha\left|\widehat{u}(t, \xi)_{k \ell}\right| & \lesssim   {e}^{\left(-\frac{b}{2}+\sqrt{ \frac{b^2}{4}-m^2}\right) t} \left[ \left(1+\lambda_{\xi}^\alpha\right)\left|\widehat{u}_0(\xi)_{k \ell}\right|+\left|\widehat{u}_1(\xi)_{k \ell}\right|\right],
		\end{aligned}
		$$
		and 
		\begin{align*} 
			\left|\partial_t \widehat{u}(t, \xi)_{k \ell}\right| \lesssim    {e}^{\left(-\frac{b}{2}+\sqrt{ \frac{b^2}{4}-m^2}\right) t} \left[\left(1+\lambda_{\xi}^\alpha \right)\left|\widehat{u}_0(\xi)_{k \ell}\right|+\left|\widehat{u}_1(\xi)_{k \ell}\right|\right],
		\end{align*}
		for any $t \geqslant 0$.
		Thus	using the Plancherel formula along with  the above estimates, we get 
		\begin{align}\label{2L222}\nonumber
			\|\partial_t^j(-\mathcal L)^{i\alpha/2}u(t, \cdot)\|_{L^{2}(G)}^{2}&=\sum_{[\xi] \in \widehat{G}} d_{\xi} \sum_{k, \ell=1}^{d_{\xi}}\lambda_{\xi}^{2\alpha  i}\left|\partial_t^j \widehat{u}(t, \xi)_{k \ell}\right|^2    \\\nonumber
			& \lesssim {e}^{\left(- {b}+\sqrt{  {b^2}-4m^2}\right) t} \sum_{[\xi] \in \widehat{ G}} d_{\xi} \sum_{k, \ell=1}^{d_{\xi}}\left(\left(1+\lambda_{\xi}^{2\alpha}\right)^{(i+j)}\left|\widehat{u}_0(\xi)_{k \ell}\right|^2+\left|\widehat{u}_1(\xi)_{k \ell}\right|^2\right) \\ 
			& = {e}^{\left(- {b}+\sqrt{  {b^2}-4m^2}\right) t} \left[  \left\|u_{0}\right\|_{H_{\mathcal{L}}^{{\alpha(i+j) }}(G)}^{2}+\left\|u_{1}\right\|_{L^{2}(G)}^{2} \right],
		\end{align}
		for any 	  $i,j\in\{0,1\},$ such that $0\leq i+j\leq 1$.

		Now, we are in a position to prove  Proposition  \ref{2thm11}.
		\begin{proof}[Proof of Proposition \ref{2thm11}]
			The proof of Proposition \ref{2thm11} follows from  the estimates  (\ref{2L2}), (\ref{2L22}), and  (\ref{2L222})  for $\|u(t, \cdot )\|_{L^{2}(G)}$, $\left\|(-\mathcal{L})^{\alpha / 2} u(t, \cdot )\right\|_{L^{2}(G)}$, and 	$\left\|\partial_{t} u(t, \cdot )\right\|_{L^{2}(G)}$, respectively.
		\end{proof}

		\subsection{Global in time  existence}\label{2sec4}
		This subsection is devoted to  prove  Theorem \ref{2thm22}, i.e.,  the     global existence of small data solutions for the  fractional   Cauchy problem   (\ref{2number31})  in the energy evolution space  $\mathcal C\left([0,T],  H^\alpha_{\mathcal{L}}(G)\right)\cap\mathcal C^1\left([0,T],L^2(G)\right)$.  

		%
		First, we recall some notations to present the proof of Theorem \ref{2thm22}. Consider the space \[X(T):=\mathcal{C}\left([0,T],  H^\alpha_{\mathcal L}(G)\right)\cap\mathcal C^1\left([0,T],L^2(G)\right),\] equipped with the norm
		\begin{align}\label{2eq33333} 
			\|u\|_{X(T)}&:=\sup\limits_{t\in[0,T]} (A_{b, m^2}(t))^{-1} \left (  \|u(t,\cdot)\|_{L^2(G)}+\|(-\mathcal L)^{\alpha/2}u(t,\cdot)\|_{L^2(G)}+\|\partial_tu(t,\cdot)\|_{L^2(G)}\right ),
		\end{align}
		where $A_{b, m^2}(t)$ is given by
		$$
		A_{b, m^2}(t) \doteq\left\{\begin{array}{ll}
			{e}^{-\frac{b}{2} t} & \text { if } b^2<4 m^2, \\
			(t+1)  {e}^{-\frac{b}{2} t} & \text { if } b^2=4 m^2, \\
			{e}^{\left(-\frac{b}{2}+\sqrt{\frac{b^2}{4}-m^2}\right) t} & \text { if } b^2>4 m^2.
		\end{array}\right.
		$$
		
		Here we briefly recall the notion of mild solutions in our framework to the Cauchy problem (\ref{2number31})  and will analyze our approach to prove Theorem \ref{2thm22}. Applying Duhamel's principle, the solution to the nonlinear inhomogeneous problem
		\begin{align}\label{2eq3111}
			\begin{cases}
				\partial^2_tu+(-\mathcal{L})^\alpha u+b\partial_{t}u +m^2u=F(t, x), & x\in G,t>0,\\
				u(0,x)=  u_0(x),  & x\in G,\\ \partial_tu(0, x)=  u_1(x), & x\in G,
			\end{cases}
		\end{align}
		can be expressed as
		$$ u(t, x)= u_{0}(x)*_{(x)}E_{0}(t, x)+u_{1}(x)*_{(x)}E_{1}(t, x) +\int_{0}^{t} F(s, x)*_{(x)} E_{1}(t-s, x) \;d s,  $$
		where $*_{(x)}$ denotes the group convolution product on $G$ with respect to the $x$ variable. Here,  $E_{0}(t, x)$ and $E_{1}(t, x)$  are  the fundamental solutions to the homogeneous problem (\ref{2eq3111}), i.e., when  $F=0$ with initial data $\left(u_{0}, u_{1}\right)=\left(\delta_{0}, 0\right)$ and $\left(u_{0}, u_{1}\right)=$ $\left(0, \delta_{0}\right)$, respectively. 

		For a function  $u$ on   $[0, T]$ to be a mild solution to (\ref{2eq3111}),    we refer to 	  subsection \ref{sec4}. 
		Furthermore,  if the estimates \eqref{2number100} and  \eqref{2number101} hold   uniformly with respect to $T$  then the    solution can be prolonged and defined for any  $t \in  (0,\infty) $ which will be our {\it global solution}. Now we present the proof of Theorem \ref{2thm22}.

		\begin{proof}[Proof of Theorem \ref{2thm22}]
			The  expression    (\ref{f2})  can be wriiten as  $N u=u^\sharp+I[u]$, where 
			\begin{align*}
				u^\sharp(t,x)=\varepsilon u_{0}(x) *_{(x)}  E_{0}(t, x)+\varepsilon u_{1}(x) *_{(x)}  E_{1}(t, x)
			\end{align*}
			and 
			\begin{align*}
				I[u](t,x):=\int\limits_0^t |u(s,x)|^p*_x E_1(t-s, x)ds.
			\end{align*} 
			
			Now, for the part $u^\sharp$,   Theorem \ref{2thm11},   immediately  implies that
			\begin{align}\label{2f3}
				\|u^\sharp\|_{X(T)}\lesssim \|(u_0,u_1)\|_{{H}_{\mathcal L}^\alpha (G)\times L^2(G)}.
			\end{align}
			On the other hand, for the part $I[u]$, using Minkowski's integral inequality, Young's convolution inequality, Theorem \ref{2thm11}, and  by time translation invariance property of the Cauchy problem (\ref{2number31}), we get  
			\begin{align}\label{2f}\nonumber
				\|\partial_t^j(-\mathcal L)^{i\alpha/2}I[u]\|_{L^2(G)}&=\left(\int_{G}	\big |\partial_t^j(-\mathcal L)^{i\alpha/2} \int\limits_0^t |u(s,x)|^p*_x E_1(t-s, x)ds\big |^2 dg\right)^{\frac{1}{2}}\\\nonumber
				&=\left(\int_{G}\big	|  \int\limits_0^t |u(s,x)|^p*_x \partial_t^j(-\mathcal L)^{i\alpha/2}E_1(t-s, x)ds\big|^2 dg\right)^{\frac{1}{2}}	\\\nonumber
				&\lesssim  \int\limits_0^t \| |u(s,\cdot )|^p*_x \partial_t^j(-\mathcal L)^{i\alpha/2}E_1(t-s, \cdot)\|_{L^2(G)}ds\\\nonumber
				&\lesssim  \int\limits_0^t \| u(s,\cdot)^p\|_{L^2(G)} \|\partial_t^j(-\mathcal L)^{i\alpha/2}E_1(t-s, \cdot)\|_{L^2(G)}ds\\\nonumber
				&\lesssim \int\limits_0^t A_{b, m^2}(t-s) \|u(s,\cdot)\|^p_{L^{2p}(G)}ds\\\nonumber
				&\lesssim\int\limits_0^t A_{b, m^2}(t-s) \|u(s,\cdot)\|^{p\theta(n,2p, \alpha)}_{H^\alpha_\mathcal L(G)}\|u(s,\cdot)\|^{p(1-\theta(n,2p,\alpha ))}_{L^2(G)}ds \\ \nonumber
				&\lesssim\int\limits_0^t A_{b, m^2}(t-s) A_{b, m^2}(s)^p \|u\|^p_{X(s)}ds\\
				&\lesssim \|u\|^p_{X(t)} \int\limits_0^t A_{b, m^2}(t-s) A_{b, m^2}(s)^p  ds \leq \|u\|^p_{X(t)} A_{b, m^2}(t),
			\end{align} 
			for $i,j\in\{0,1\}$ such that $0\leq i+j\leq 1.$   Again for $i,j\in\{0,1\}$ such that $0\leq i+j\leq 1,$ a similar calculations as in  (\ref{2f}) together with H\"older's inequality  and    (\ref{eq34}),  we get 
			\begin{align}\label{2f5}\nonumber
				& 	\|\partial_t^j(-\mathcal L)^{i\alpha/2}\left(I[u]-I[v]\right)\|_{L^2(G)}\\\nonumber&\lesssim \int\limits_0^t A_{b, m^2}(t-s) \||u(s,\cdot)|^p-|v(s,\cdot)|^p\|_{L^{2}(G)}ds\\\nonumber
				&\lesssim\int\limits_0^t A_{b, m^2}(t-s) \|u(s,\cdot)-v(s,\cdot)\|_{L^{2p}(G)}\left(\|u(s,\cdot)\|^{p-1}_{L^{2p}(G)}+\|v(s,\cdot)\|^{p-1}_{L^{2p}(G)}\right)ds\\ \nonumber
				&\lesssim  \|u-v\|_{X(t)}\left(\|u\|^{p-1}_{X(t)}-\|v\|^{p-1}_{X(t)}\right) \int\limits_0^t A_{b, m^2}(t-s) A_{b, m^2}(s)^p  ds\\
				&  \leq \|u\|^p_{X(t)} A_{b, m^2}(t).
			\end{align} 
			Thus 	combining  (\ref{2f3}),   (\ref{2f}), and (\ref{2f5}), we have
			\begin{align}\label{21}
				\|N u\|_{X(t)} \leq D  \left\|\left(u_{0}, u_{1}\right)\right\|_{H_{\mathcal{L}}^{\alpha }(G) \times L^{2}(G)}+D\|u\|_{X(t)}^{p} 
			\end{align} 
			and 
			\begin{align}\label{22} \|Nu-Nv\|_{X(T)}\leq   D \|u-v\|_{X(t)}\left(\|u\|^{p-1}_{X(T)}-\|v\|^{p-1}_{X(T)}\right).\end{align} 
			This shows that the map $N$ turns out to be a contraction in some neighborhood of $0$ in the Banach space $X(T).$ Therefore, Banach's fixed point theorem gives us the uniquely determined fixed point $u$  on  $ [0, T ]$ for the map $N$, which is our mild solution.  
			
			Note that, thanks to the exponential decay rate $A_{b,m^2}(t)$ both in (\ref{2f}) and (\ref{2f5}) we have the uniform boundedness of the integral $$(A_{b, m^2}(t))^{-1}\int_0^t A_{b, m^2}(t-s) A_{b, m^2}(s)^p  ds,$$ without    any conditions on $p$.
			This completes the proof of Theorem \ref{2thm22}.
		\end{proof}
		
		We have the following remark regarding Theorem.

		\section{Final remarks}\label{sec7}

		In \cite{shyamm}, we already seen that for the    fractional wave operator  	$\partial^2_t+(-\mathcal{L})^\alpha $ and for  the damped wave operator $	\partial^2_t+(-\mathcal{L})^\alpha +	\partial_t$ defined in Section \ref{sec3}, under some suitable assumptions on the initial data,   the  local in-time solutions to these    Cauchy problem  blow up in finite time for any $p > 1$.   In other words, we do not get any global in-time existence result in this case. However,      in Section \ref{sec6} of this paper,  we have seen that the presence of a positive damping term and a positive mass term in the Cauchy problem completely reverses the scenario. In a similar manner, 	  the fractional damped wave equation on the Heisenberg group   will be considered in a forthcoming paper.

		\section{Data availability statement}
		The authors confirm that the data supporting the findings of this study are available within the article  and its supplementary materials.

	\end{document}